\newtheorem{theorem}{Theorem}[section]
\newtheorem{corollary}[theorem]{Corollary}
\newtheorem{proposition}[theorem]{Proposition}
\newtheorem{lemma}[theorem]{Lemma}
\newtheorem{question}[theorem]{Question}
\theoremstyle{definition}
\newtheorem{definition}[theorem]{Definition}
\def\N{{\mathbb N}}
\begin{document}
\title[The $k$-property and countable tightness of free topological vector spaces]
{The $k$-property and countable tightness of free topological vector spaces}

  \author{Fucai Lin}
  \address{(Fucai Lin): School of mathematics and statistics,
  Minnan Normal University, Zhangzhou 363000, P. R. China}
  \email{linfucai2008@aliyun.com; linfucai@mnnu.edu.cn}

\author{Shou Lin}
\address{(Shou Lin): Institute of Mathematics, Ningde Teachers' College, Ningde, Fujian
352100, P. R. China} \email{shoulin60@163.com}

  \author{Chuan Liu}
\address{(Chuan Liu): Department of Mathematics,
Ohio University Zanesville Campus, Zanesville, OH 43701, USA}
\email{liuc1@ohio.edu}

  \thanks{The first author is supported by the NSFC (Nos. 11571158, 11471153),
  the Natural Science Foundation of Fujian Province (Nos. 2017J01405, 2016J05014, 2016J01671, 2016J01672) of China and the Program for New Century Excellent Talents in Fujian Province University.}

  \keywords{free topological vector space; $k$-space; $k_{\omega}$-space; countable tightness; separable.}
  \subjclass[2000]{46A03; 22A05; 54A25; 54D50.}

  \begin{abstract}
The free topological vector space $V(X)$ over a Tychonoff space $X$ is a pair consisting of a topological vector space $V(X)$ and a continuous map $i=i_{X}: X\rightarrow V(X)$ such that every continuous mapping $f$ from $X$ to a topological vector space $E$ gives rise to a unique continuous linear operator $\overline{f}: V(X)\rightarrow E$ with $f=\overline{f}\circ i$. In this paper the $k$-property and countable tightness of free topological vector space over some generalized metric spaces are studied. The characterization of a space $X$ is given such that the free topological vector space $V(X)$ is a $k$-space or the tightness of $V(X)$ is countable. Furthermore, the characterization of a space $X$ is also provided such that if the fourth level of $V(X)$ has the $k$-property or is of the countable tightness then $V(X)$ is too.
  \end{abstract}

  \maketitle

\section{Introduction}
The free topological group $F(X)$, the free abelian topological group $A(X)$ and the free locally convex space $L(X)$ over a Tychonoff space $X$ were introduced by Markov \cite{MA1945} and intensively studied over the last half-century, see for example \cite{AOP1989,Gab2014, Gab20141,GM2017, LLL,LL, LLLL,U1991,Y1993,Y1997,Y1998}.
Recently, in \cite{GM2017} S.S. Gabriyelyan and S.A. Morris introduced and studied the free topological vector space $V(X)$ over a Tychonoff space $X$. One surprising fact is that the free topological vector spaces in some respect behave better than the free locally convex spaces. For example, if $X$ is a $k_{\omega}$-space then $V(X)$ is also a $k_{\omega}$-space \cite{GM2017}; however, for a Tychonoff space $X$, the space $L(X)$ is a $k$-space if and only if $X$ is a countable discrete space \cite{Gab2014}. Therefore, it is natural to consider the following question:

\begin{question}\label{q1}
If $V(X)$ is a $k$-space over a Tychonoff space $X$, is $V(X)$ a $k_{\omega}$-space?
\end{question}

Recently, S.S. Gabriyelyan proved that for a metrizable space $X$ the free locally convex $L(X)$ is of countable tightness if and only if $X$ is separable, see \cite{Gab2014}. Hence it is is natural to consider the following question:

\begin{question}\label{q2}
Let $X$ be a metrizable space. Is the countable tightness of $V(X)$ equivalent to the separability of $X$?
\end{question}

In this paper, we shall give an affirmative answer to Question~\ref{q1} if $X$ is a $k^*$-metrizable space, and an affirmative answer to Question~\ref{q2} if $X$ is a paracompact $k$-and $\sigma$-space. Moreover, the characterization of a space $X$ is also given such that if the fourth level of $V(X)$ has the $k$-property or is of the countable tightness then $V(X)$ is too.

The paper is organized as follows. In Section 2, we introduce the necessary notation and terminologies which are
  used for the rest of the paper. In Section 3, we investigate the
  $k$-property and countable tightness on free topological vector spaces. In section 4, we pose some interesting questions about the free topological vector spaces which are still unknown to us.

  \section{Notation and Terminologies}
In this section, we introduce the necessary notation and terminologies. Throughout this paper, all topological spaces are assumed to be
  Tychonoff and all vector spaces are over the field of real number $\mathbb{R}$, unless otherwise is explicitly stated.
  First of all, let $\N$ be the set of all positive
  integers and $\omega$ the first infinite ordinal. Let $X$ be a topological space and $A \subseteq X$ be a subset of $X$.
  The \emph{closure} of $A$ in $X$ is denoted by $\overline{A}$; moreover, we always denote the set of all the non-isolated points of $X$ by $X^{\prime}$. For undefined
  notation and terminologies, the reader may refer to \cite{AT2008},
  \cite{E1989}, \cite{Gr1984} and \cite{Lin2015}.

  \medskip
  \begin{definition}
  Let $X$ be a space.
 \begin{enumerate}
\item The space $X$ is {\it separable} if it contains a countable subset $B$ such that $\overline{B}=X$.
\item The space $X$ is of {\it countable tightness} if the closure of any subset $A$ of $X$ equals the union of closures of all countable subsets of $A$, and the countable tightness of a space $X$ is denoted by $t(X)\leq\omega$.
\item The space $X$ is called a
  \emph{$k$-space} provided that a subset $C\subseteq X$ is closed in $X$ if
  $C\cap K$ is closed in $K$ for each compact subset $K$ of $X$.
\item  The space $X$ is called a \emph{$k_{\omega}$-space} if there exists
  a family of countably many compact subsets $\{K_{n}: n\in\mathbb{N}\}$ of
  $X$ such that each subset $F$ of $X$ is closed in $X$ provided that
  $F\cap K_{n}$ is closed in $K_{n}$ for each $n\in\mathbb{N}$.
\item A subset $P$ of $X$ is called a
  \emph{sequential neighborhood} of $x \in X$, if each
  sequence converging to $x$ is eventually in $P$. A subset $U$ of
  $X$ is called \emph{sequentially open} if $U$ is a sequential neighborhood of
  each of its points. A subset $F$ of
  $X$ is called \emph{sequentially closed} if $X\setminus F$ is sequentially open. The space $X$ is called a \emph{sequential space} if each
  sequentially open subset of $X$ is open.
\end{enumerate}
  \end{definition}

Let $\kappa$ be an infinite cardinal.
  For each $\alpha\in\kappa$, let $T_{\alpha}$ be a sequence converging to
  $x_{\alpha}\not\in T_{\alpha}$. Let $T=\bigoplus_{\alpha\in\kappa}(T_{\alpha}\cup\{x_{\alpha}\})$ be the topological sum of $\{T_{\alpha}
  \cup \{x_{\alpha}\}: \alpha\in\kappa\}$. Then
  $S_{\kappa}=\{x\}  \cup \bigcup_{\alpha\in\kappa}T_{\alpha}$
  is the quotient space obtained from $T$ by
  identifying all the points $x_{\alpha}\in T$ to the point $x$.

  A space $X$ is called an \emph{ $S_{2}$}-{space} ({\it Arens' space})  if
$$X=\{\infty\}\cup \{x_{n}: n\in \mathbb{N}\}\cup\{x_{n, m}: m, n\in
\omega\}$$ and the topology is defined as follows: Each
$x_{n, m}$ is isolated; a basic neighborhood of $x_{n}$ is
$\{x_{n}\}\cup\{x_{n, m}: m>k\}$, where $k\in\omega$;
a basic neighborhood of $\infty$ is $$\{\infty\}\cup (\bigcup\{V_{n}:
n>k\ \mbox{for some}\ k\in \omega\}),$$ where $V_{n}$ is a
neighborhood of $x_{n}$ for each $n\in\omega$.

 \medskip
  \begin{definition}
  Let $X$ be a space and $\mathscr P$ a family of subsets of $X$.
 \begin{enumerate}
\item The family $\mathscr{P}$ is a \emph{network} of $X$ if for each $x\in X$ and $x\in U$ with $U$ open in
$X$, then $x\in P\subset U$ for some $P\in\mathscr P_x$. A regular space $X$ is called a \emph{$\sigma$-space} (resp. \emph{cosmic space}) if it has a $\sigma$-locally finite network (resp. countable network).
\item The family $\mathscr P$ is called
  a \emph{$cs$-network} \cite{G1971} at a point $x\in X$ if for every sequence
  $\{x_n: n \in \N\}$ converging to $x$ and an arbitrary open neighborhood
  $U$ of $x$ in $X$ there exist an $m\in\N$ and an element $P\in
  \mathscr P$ such that
  \[
  \{x\}\cup \{x_n: n\geqslant m\} \subseteq P \subseteq U.
  \]
The space $X$ is called \emph{csf-countable} if $X$ has a countable $cs$-network
  at each point $x\in X$.
\item The family $\mathscr P$ is called a {\it $k$-network}
  \cite{O1971} if for every compact subset $K$ of $X$ and an arbitrary open set
  $U$ containing $K$ in $X$ there is a finite subfamily $\mathscr {P}^{\prime}
  \subseteq \mathscr {P}$ such that $K\subseteq \bigcup\mathscr {P}^{\prime}
  \subseteq U$. A regular space $X$ is called an {\it $\aleph$-space} (resp. {\it $\aleph_{0}$-space}) if it has a $\sigma$-locally finite $k$-network (resp. countable $k$-network).
\item Let $\mathscr P$ be a cover of $X$ such that (i)
  $\mathscr P = \bigcup_{x\in X}\mathscr{P}_{x}$; (ii) for each point $x\in X$,
  if $U,V\in \mathscr{P}_{x}$, then $W\subseteq U\cap V$ for some $W\in
  \mathscr{P}_{x}$; and (iii) for each point $x\in X$ and each open neighborhood
  $U$ of $x$ there is some $P\in\mathscr P_x$ such that $x\in P \subseteq U$.
  Then, $\mathscr P$ is called an \emph{sn-network} \cite{Lin1996} for $X$ if
  for each point $x\in X$, each element of $\mathscr P_x$ is a sequential
  neighborhood of $x$ in $X$, and $X$ is called \emph{snf-countable}
  \cite{Lin1996} if $X$ has an $sn$-network $\mathscr P$ and $\mathscr P_x$
  is countable for all $x\in X$.
\end{enumerate}
  \end{definition}

Clearly, the following implications follow directly
  from definitions:
  \[
  \mbox{first countable} \Rightarrow \mbox{$snf$-countable} \Rightarrow
  \mbox{$csf$-countable}.
  \]
  Note that none of the above implications can be reversed. It is well known
  that $S_{\omega}$ is $csf$-countable but not
  $snf$-countable and $S_{\omega_{1}}$ is not $csf$-countable.

A Hausdorff topological space $X$ is $k^*$-metrizable if $X$ is the image of a
metrizable space $M$ under a continuous map $f: M\rightarrow X$ having a section $s: X\rightarrow M$ that
preserves precompact sets in the sense that the image $s(K)$ of any compact set $K\subset X$ has
compact closure in $X$. In \cite{BBK2008}, T. Banakh, V.I. Bogachev and A.V. Kolesnikov introduced this concept of $k^*$-metrizable spaces, and systematically studied this class of $k^*$-metrizable spaces. The class of $k^*$-metrizable spaces is closed under many countable (and some uncountable)
topological operations. Regular $k^*$-metrizable spaces can be characterized as spaces with
$\sigma$-compact-finite $k$-network, see \cite[Theorem 6.4]{BBK2008}. This characterization shows that the class
of $k^*$-metrizable spaces is sufficiently wide and contains all La\v{s}nev spaces (closed images
of metrizable spaces), all $\aleph_{0}$-spaces and all $\aleph$-spaces.
$k^*$-metrizable spaces form a new class
of generalized metric spaces and have various applications in topological
algebra, functional analysis, and measure theory, see \cite{BBK2008, G2013}. From \cite{BBK2008}, we list some properties of $k^*$-metrizable spaces.

(1) The $k^*$-metrizability is preserved by subspaces, countable product, box-product and topological sum;

(2) Each sequentially compact subset of a $k^*$-metrizable space is metrizable;

(3) A $k^*$-metrizable space is sequential if and only if it is a $k$-space.

(4) A regular space is $k^*$-metrizable space if and only if it has a $\sigma$-compact-finite $k$-network;

(5) A regular $X$ is a $k^*$-metrizable space with countable network if and only if it is an $\aleph_{0}$-space.

\smallskip

\begin{definition}\cite{GM2017}
The free topological vector space $V(X)$ over a Tychonoff space $X$ is a pair consisting of a topological vector space $V(X)$ and a continuous map $i=i_{X}: X\rightarrow V(X)$ such that every continuous mapping $f$ from $X$ to a topological vector space (tvs) $E$ gives rise to a unique continuous linear operator $\overline{f}: V(X)\rightarrow E$ with $f=\overline{f}\circ i$.
\end{definition}

The change of the word ``topological vector space'' to ``abelian topological group'' and ``locally convex space'' in the above definition gives the definition of the free abelian topological group $A(X)$ and free locally convex space $L(X)$ on $X$ respectively.

For a space $X$ and an arbitrary $n\in\mathbb{N}$, we denote by $\mbox{sp}_{n}(X)$ the following subset of $V(X)$
$$\mbox{sp}_{n}(X)=\{\lambda_{1}x_{1}+\cdots+\lambda_{n}x_{n}: \lambda_{i}\in[-n, n], x_{i}\in X, i=1, \ldots, n\}.$$Then $V(X)=\bigcup_{n\in\mathbb{N}}\mbox{sp}_{n}(X)$ and each $\mbox{sp}_{n}(X)$ is closed in $V(X)$, see \cite[Theorem 2.3]{GM2017}. Moreover, If $v\in V(X)$, then $v$ has a unique representation $$v=\lambda_{1}x_{1}+\cdots+\lambda_{n}x_{n},\ \mbox{where}\ \lambda_{i}\in\mathbb{R}\setminus\{0\}\ \mbox{and}\ x_{i}\in X\ \mbox{are distinct};$$then the set $\mbox{supp}(v)=\{x_{1}, \ldots, x_{n}\}$ is called the {\it support} \cite{GM2017} of the element $v$. For every $n\in\mathbb{N}$, define the mapping $T_{n}: [-n, n]^{n}\times X^{n}\rightarrow\mbox{sp}_{n}(X)$ by $$T_{n}((a_{1}, \cdots, a_{n})\times (x_{1}, \cdots, x_{n}))=a_{1}x_{1}+\cdots+a_{n}x_{n}$$ for arbitrary $(a_{1}, \cdots, a_{n})\times (x_{1}, \cdots, x_{n})\in [-n, n]^{n}\times X^{n}$.

\section{main results}
First of all, we give a characterization of a collectionwise normal $\aleph$-space $X$ such that $V(X)$ is an $\aleph_{0}$-space, and obtain that $\mbox{sp}_{1}(X)$ is $csf$-countable if and only if $V(X)$ is an $\aleph_{0}$-space if and only if $X$ is separable. In order to prove this result, we need two lemmas and some concepts.

\begin{lemma}\label{p0}
If $X$ is Dieudom\'{e}-complete and $\phi$ is a compact set in $V(X)$, then there exist a compact set $Z\subset X$ and $n\in\mathbb{N}$ such that $\phi$ is the continuous image of some compact subspace in $[-n, n]^{n}\times Z^{n}$.
\end{lemma}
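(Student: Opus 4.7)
The plan is to produce $n\in\mathbb{N}$ and a compact $Z\subset X$ with $\phi\subseteq T_n([-n,n]^n\times Z^n)$, and then take for the required compact subspace
\[
K:=T_n^{-1}(\phi)\cap\bigl([-n,n]^n\times Z^n\bigr).
\]
Since $T_n$ is continuous and $\phi$ is closed in the Hausdorff space $V(X)$, the set $K$ is closed in the compact box $[-n,n]^n\times Z^n$, hence compact, and $T_n|_K$ is a continuous surjection onto $\phi$: each $v\in\phi\subseteq\mathrm{sp}_n(X)$ admits a representation $v=\lambda_1 x_1+\cdots+\lambda_m x_m$ with $m\le n$, $|\lambda_i|\le n$, $x_i\in Z$, which we may pad by zero coefficients at arbitrary points of $Z$. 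Thus the task reduces to two assertions: (i) $\phi\subseteq\mathrm{sp}_n(X)$ for some $n$, and (ii) $Y:=\bigcup_{v\in\phi}\mathrm{supp}(v)$ has compact closure in $X$.

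A clean route to both assertions runs through the free locally convex space. Applying the universal property of $V(X)$ to the continuous inclusion $X\hookrightarrow L(X)$ produces a continuous linear map $V(X)\to L(X)$ which is the identity on the common underlying vector space, so $\phi$ is also compact in $L(X)$. Since the strata $\mathrm{sp}_n(X)$ and $L_n(X)$ coincide as sets, and support is an algebraic notion independent of the topology, both (i) and (ii) are subsumed by the corresponding statements for compact subsets of $L(X)$; and for $L(X)$ it is a classical result (due to Uspenskii and Flood, cf.\ \cite{AT2008}) that over a Dieudonn\'{e}-complete space $X$, every compact subset of $L(X)$ lies in some $L_n(X)$ and has compact support.

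The chief obstacle is (ii): step (i) can be derived independently from compactness of $\phi$ together with the closedness of $\mathrm{sp}_n(X)$ in $V(X)$ provided by \cite[Theorem 2.3]{GM2017} via a routine continuous-linear-functional argument, whereas (ii) genuinely exploits Dieudonn\'{e}-completeness. The standard technique for (ii) embeds $X$ as a closed subspace of a product $\prod_{\alpha}M_\alpha$ of metric spaces, pushes $\phi$ forward under the continuous linear extensions $\pi_\alpha:V(X)\to V(M_\alpha)$ coming from the quotient maps $X\to M_\alpha$, settles the metrizable case directly (where $V(M_\alpha)$ is well-behaved enough that sequential and $k_\omega$-methods handle compact sets on compact pieces), and then uses the product embedding together with Dieudonn\'{e}-completeness to lift the coordinate-wise relative compactness of $Y$ back to relative compactness of $Y$ in $X$. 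Once (i) and (ii) are in hand, we set $Z:=\overline{Y}$ and invoke the initial construction to finish.
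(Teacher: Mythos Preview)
Your argument is correct and matches the paper's proof almost verbatim: both take $Z=\overline{\mathrm{supp}(\phi)}$, set $K=T_n^{-1}(\phi)\cap([-n,n]^n\times Z^n)$, and observe that $T_n|_K$ is a continuous surjection from a closed (hence compact) subset of $[-n,n]^n\times Z^n$ onto $\phi$. The only difference is that the paper obtains the two key facts ($\phi\subset\mathrm{sp}_n(X)$ for some $n$, and compactness of $\overline{\mathrm{supp}(\phi)}$) by a single citation to \cite[Proposition~5.5]{GM2017}, whereas you route through the continuous linear identity $V(X)\to L(X)$ and invoke the classical analogue for $L(X)$; either path delivers the same ingredients.
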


\begin{proof}
By \cite[Proposition 5.5]{GM2017}, the set $Z=\overline{\mbox{supp}(\phi)}$ is compact in $X$ and there exists $n\in\mathbb{N}$ such that $\phi\subset\mbox{sp}_{n}(X)$. Consider the mapping $$\sigma=T_{n}|_{([-n, n]^{n}\times Z^{n})\cap T_{n}^{-1}(\phi)}: ([-n, n]^{n}\times Z^{n})\cap T_{n}^{-1}(\phi)\rightarrow \phi.$$ Then $\sigma$ is a continuous onto mapping from a compact subspace $([-n, n]^{n}\times Z^{n})\cap T_{n}^{-1}(\phi)$ to $\phi$, hence $\phi$ is the continuous image of some compact subspace in $[-n, n]^{n}\times Z^{n}$.
\end{proof}

\begin{lemma}\label{t00}
If $X$ is an $\aleph_{0}$-space, then $V(X)$ is also an $\aleph_{0}$-space.
\end{lemma}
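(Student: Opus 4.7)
The strategy is to exhibit a countable $k$-network for $V(X)$ by lifting the problem to the auxiliary product spaces $Y_n:=[-n,n]^n\times X^n$ and exploiting the continuous surjections $T_n\colon Y_n\to\mbox{sp}_n(X)$ together with Lemma~\ref{p0}. Since the class of $\aleph_{0}$-spaces is closed under finite products and $[-n,n]^n$ is a compact metric space, each $Y_n$ is itself an $\aleph_{0}$-space. Fix a countable $k$-network $\mathscr R_n$ for $Y_n$ and set
\[
\mathscr Q:=\bigcup_{n\in\N}\{T_n(R):R\in\mathscr R_n\},
\]
a countable family of subsets of $V(X)$.

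To verify that $\mathscr Q$ is a $k$-network for $V(X)$, fix a compact $K\subseteq V(X)$ and an open neighborhood $U$ of $K$. Every $\aleph_{0}$-space is Lindel\"of and regular, hence paracompact, and in particular Dieudonn\'e-complete, so Lemma~\ref{p0} applies and produces $n\in\N$, a compact $Z\subseteq X$, and a compact set $K'\subseteq ([-n,n]^n\times Z^n)\cap T_n^{-1}(K)\subseteq Y_n$ with $T_n(K')=K$. The preimage $T_n^{-1}(U)$ is open in $Y_n$ and contains $K'$, so the $k$-network property of $\mathscr R_n$ supplies a finite $\mathscr R'\subseteq\mathscr R_n$ with $K'\subseteq\bigcup\mathscr R'\subseteq T_n^{-1}(U)$. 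Applying $T_n$ yields $K\subseteq\bigcup_{R\in\mathscr R'}T_n(R)\subseteq U$, and each $T_n(R)$ lies in $\mathscr Q$, as needed. Because $V(X)$ is a Hausdorff topological vector space and hence completely regular, the countable $k$-network $\mathscr Q$ is enough to conclude that $V(X)$ is an $\aleph_{0}$-space.

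The only real content is the verification step, and what makes it work is precisely the compact-covering statement of Lemma~\ref{p0}: \emph{every} compact subset of $V(X)$ lifts through some $T_n$ to a compact subset of $Y_n$. Without this, the push-forwards $\{T_n(R):R\in\mathscr R_n\}$ would capture only those compact sets already of the special form $T_n(\text{compact})$; with it, the argument reduces cleanly to the closure of the class of $\aleph_{0}$-spaces under finite products and to the triviality that a countable union of countable families is countable.
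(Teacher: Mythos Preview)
Your argument is correct and follows essentially the same approach as the paper: fix countable $k$-networks on the product spaces $Y_n=[-n,n]^n\times X^n$, push them forward via $T_n$, and use Lemma~\ref{p0} (available because an $\aleph_0$-space is Dieudonn\'e-complete) to lift an arbitrary compact subset of $V(X)$ back to some $Y_n$. The only cosmetic difference is that the paper first shows $\{T_n(P):P\in\mathscr P_n\}$ is a $k$-network for the subspace $\mbox{sp}_n(X)$ and then invokes \cite[Corollary 3.4]{GM2017} to assemble these into a $k$-network for $V(X)$, whereas you verify the $k$-network condition for $V(X)$ directly; your route is slightly more self-contained but otherwise identical in substance.
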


\begin{proof}
For each $n\in\mathbb{N}$, let $Y_{n}=[-n, n]^{n}\times X^{n}$ and fix a countable $k$-network $\mathscr{P}_{n}$ in $Y_{n}$ since $Y_{n}$ is an $\aleph_{0}$-space. Since $X$ is Dieudom\'{e}-complete, it follows from Lemma~\ref{p0} that for each compact set $\phi\subset \mbox{sp}_{n}(X)$ there exists a compact set $\phi_{1}\subset Y_{n}$ such that $T_{n}(\phi_{1})=\phi$. For each $n\in\mathbb{N}$, since $T_{n}$ is continuous, it easily see that the family $\mathscr{F}_{n}=\{T_{n}(P): P\in\mathscr{P}_{n}\}$ is a countable $k$-network of the subspace $\mbox{sp}_{n}(X)$. Therefore, it follows from \cite[Corollary 3.4]{GM2017} that the family $\mathscr{F}=\bigcup\{\mathscr{F}_{n}: n\in\mathbb{N}\}$ is a countable $k$-network of $V(X)$.
\end{proof}

Let $\kappa$ be an infinite cardinal, let $\mathbb{V}_{\kappa}=\bigoplus_{i<\kappa}\mathbb{R}_{i}$ be the direct sum of $\kappa$ copies of $\mathbb{R}$, and let $\tau_{\kappa}$, $\nu_{\kappa}$ and $\mu_{\kappa}$ be the box topology, maximal locally convex vector topology and maximal vector topology on $\mathbb{V}_{\kappa}$, respectively. Obviously, $\tau_{\kappa}\subset\nu_{\kappa}\subset\mu_{\kappa}$ and $V(D)\cong (\mathbb{V}_{\kappa}, \mu_{\kappa})$, where $D$ is a discrete space of cardinality $\kappa$. Indeed, let $D=\{x_{\alpha}: \alpha<\kappa\}$. Then define the mapping $f: V(D)\rightarrow (\mathbb{V}_{\kappa}, \mu_{\kappa})$ by $$f(\lambda_{1}x_{\alpha_{1}}+\ldots+\lambda_{n}x_{\alpha_{n}})=(y_{\beta})_{\beta<\kappa},$$ where $y_{\alpha_{i}}=\lambda_{i}, 1\leq i\leq n$ and $y_{\beta}=0$ if $\beta\not\in\{\alpha_{i}:1\leq i\leq n\}$. Then $f$ is a topologically linear isomorphic between $V(D)$ and $(\mathbb{V}_{\kappa}, \mu_{\kappa})$.
Moreover, it follows from \cite[Theorem 1]{P2012} that $\tau_{\omega}=\nu_{\omega}=\mu_{\omega}$. However, if $\kappa$ is uncountable the situation changes \cite[Theorem 1]{P2012}. Furthermore, for each $n\in\mathbb{N}$, let $$\mathbb{V}_{\kappa, n}=\{(x_{\alpha})_{\alpha<\kappa}\in\mathbb{V}_{\kappa}: |\{\alpha\in\kappa: x_{\alpha}\neq 0\}|\leq n, x_{\alpha}\in[-n, n], \alpha<\kappa\}.$$
Obviously, each $\mathbb{V}_{\kappa, n}$ is closed subspace in $\mathbb{V}_{\kappa}$ within the topologies $\tau_{\kappa}$, $\nu_{\kappa}$ or $\mu_{\kappa}$.

For a subspace $Z$ of a space $X$, let $V(Z, X)$ be the vector subspace of $V(X)$ generated algebraically by $Z$. We recall the following fact from \cite{P2012}.

{\bf Fact:} Let $\kappa$ be an infinite cardinal. For each $i\in\kappa$, choose some $\lambda_{i}\in\mathbb{R}^{+}_{i}, \lambda_{i}>0$, and denote by $\mathcal{S}_{\kappa}$ the family of all subsets $\mathbb{V}_{\kappa}$ of the form $$\bigcup_{i<\kappa}([-\lambda_{i}, \lambda_{i}]\times \prod_{j<\kappa, j\neq i}\{0\}).$$For every sequence $\{S_{k}\}_{k\in\omega}$ in $\mathcal{S}_{\kappa}$, we put $$\sum_{k\in\omega}S_{k}=\bigcup_{k\in\omega}(S_{0}+\ldots+S_{k})$$ and denote by $\mathscr{N}_{\kappa}$ the family of all subsets of $\mathbb{V}_{\kappa}$ of the form $\sum_{k\in\omega}S_{k}$. Then $\mathscr{N}_{\kappa}$ is a base at 0 for $(\mathbb{V}_{\kappa}, \mu_{\kappa})$.

\begin{theorem}\label{csf}
Let $X$ be a collectionwise normal $\aleph$-space. Then the following statements are equivalent:
 \begin{enumerate}
\item $\mbox{sp}_{1}(X)$ is $csf$-countable;
\item $V(X)$ is an $\aleph_{0}$-space;
\item $X$ is separable.
\end{enumerate}
\end{theorem}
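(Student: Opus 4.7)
I will establish the three statements are equivalent via the cycle $(2) \Rightarrow (1) \Rightarrow (3) \Rightarrow (2)$.

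The implication $(2) \Rightarrow (1)$ is automatic: an $\aleph_0$-space has a countable $k$-network, hence a countable $cs$-network at each point, so $V(X)$ is $csf$-countable and the subspace $\mbox{sp}_1(X)$ inherits this. For $(3) \Rightarrow (2)$ it suffices, by Lemma~\ref{t00}, to upgrade $X$ to an $\aleph_0$-space. Since $X$ is an $\aleph$-space it is a $\sigma$-space and thus subparacompact, and together with collectionwise normality this makes $X$ paracompact. Collectionwise normality also lets any closed discrete family of $X$ be separated by a pairwise disjoint family of open sets, each of which must meet the countable dense set of $X$; hence $X$ has countable extent, and paracompact plus countable extent is Lindel\"of. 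In a Lindel\"of space every locally finite family is countable, so the $\sigma$-locally finite $k$-network of $X$ collapses to a countable one, making $X$ an $\aleph_0$-space; Lemma~\ref{t00} then completes the step.

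The substantive direction is $(1) \Rightarrow (3)$, which I argue by contrapositive. If $X$ is not separable, the chain just above shows $X$ is not Lindel\"of, and paracompactness yields an uncountable closed discrete subset $D = \{x_\alpha : \alpha < \omega_1\}$. Collectionwise normality delivers a discrete family $\{U_\alpha\}$ of open sets with $x_\alpha \in U_\alpha$ and continuous bump functions $\rho_\alpha : X \to [0,1]$ with $\rho_\alpha(x_\alpha) = 1$ and $\rho_\alpha \equiv 0$ off $U_\alpha$. For any function $\mu : D \to (0,1]$, discreteness makes $f_\mu := \sum_\alpha \mu(x_\alpha)^{-1}\rho_\alpha$ a well-defined continuous function $X \to [0,\infty)$, and its linear extension $\overline{f_\mu} : V(X) \to \mathbb{R}$ produces an open neighborhood $U_\mu := \overline{f_\mu}^{-1}((-1,1)) \cap \mbox{sp}_1(X)$ of $0$ in $\mbox{sp}_1(X)$ satisfying the key property $\lambda x_\alpha \in U_\mu \Leftrightarrow |\lambda| < \mu(x_\alpha)$.

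Suppose now for contradiction that $\{P_n : n \in \omega\}$ is a countable $cs$-network at $0$ in $\mbox{sp}_1(X)$, and set $T_n = \{\alpha < \omega_1 : \text{some tail of } \{m^{-1}x_\alpha\}_m \text{ lies in } P_n\}$. Since each $\{m^{-1}x_\alpha\}$ converges to $0$, applying the $cs$-network condition with $U = \mbox{sp}_1(X)$ forces $\bigcup_n T_n = \omega_1$, and hence $I := \{n : |T_n| = \omega_1\}$ is nonempty. For each $n \in I$, pick pairwise distinct $\alpha_n \in T_n$ and witnesses $N_n$ with $\{m^{-1}x_{\alpha_n} : m \geq N_n\} \subset P_n$, then set $\mu(x_{\alpha_n}) = (2N_n)^{-1}$ on these points and $\mu \equiv 1$ elsewhere on $D$. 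By the characterization of $U_\mu$, one has $N_n^{-1} x_{\alpha_n} \in P_n \setminus U_\mu$ for each $n \in I$, so no $P_n$ with $n \in I$ is contained in $U_\mu$; therefore, for every $\alpha < \omega_1$, the $cs$-network index $k = k(\alpha)$ provided for the sequence $\{m^{-1}x_\alpha\}$ and $U_\mu$ must lie outside $I$, whence $\omega_1 \subset \bigcup_{k \notin I} T_k$ --- a countable union of countable sets --- which is absurd. The main obstacle is engineering the neighborhoods $U_\mu$ to be rich enough for a diagonal attack on a countable $cs$-network: this rests crucially on collectionwise normality, which both extracts the uncountable closed discrete $D$ and permits an \emph{arbitrary} positive function $\mu^{-1}$ on $D$ to be realized as the trace of a single continuous real-valued function on $X$. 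Once this machinery is in place, the cardinality mismatch between $\omega_1$ and $\omega$ drives the contradiction.
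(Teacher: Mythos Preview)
Your argument is correct. The easy implications are handled as in the paper (you spell out $(3)\Rightarrow(2)$ in detail where the paper says ``easily check''), but your treatment of $(1)\Rightarrow(3)$ takes a genuinely different route. The paper proceeds structurally: from the uncountable closed discrete $D\subset X$ it passes to $V(D,X)\cong(\mathbb{V}_{\omega_1},\sigma)$, compares $\sigma$ with the box topology via the free locally convex space $L(D)\cong L(D,X)$ and Tkachenko's theorem, and thereby exhibits a copy of the fan $S_{\omega_1}$ inside $\mbox{sp}_1(X)$; since $S_{\omega_1}$ is not $csf$-countable, the contradiction follows. You instead manufacture the needed neighborhoods $U_\mu$ of $0$ directly, as preimages under continuous linear functionals $\overline{f_\mu}$ built from bump functions supported on a discrete open expansion of $D$, and then run a pigeonhole-and-spoiler diagonalization against any putative countable $cs$-network. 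In effect you are reproving in situ, without naming it, that the fan of sequences $\{m^{-1}x_\alpha\}_{\alpha<\omega_1}$ behaves like $S_{\omega_1}$ at $0$. The paper's approach buys brevity and a reusable embedding fact; yours buys a self-contained argument that avoids the machinery of free locally convex spaces and the external results of \cite{Tkachenko} and \cite{P2012}.
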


\begin{proof}
By Lemma~\ref{t00}, we have (2) $\Rightarrow$ (3). Moreover, (2) $\Rightarrow$ (1) is obvious. It suffice to prove (1) $\Rightarrow$ (3) and (3) $\Rightarrow$ (2).

(3) $\Rightarrow$ (2). Suppose that $X$ is separable, it easily check that $X$ has a countable $k$-network. Hence $V(X)$ is an $\aleph_{0}$-space by Lemma~\ref{t00}.

(1) $\Rightarrow$ (3). Assume that $\mbox{sp}_{1}(X)$ is $csf$-countable and $X$ is not separable. Since $X$ is a collectionwise normal $\aleph$-space, $X$ contains a closed discrete subspace $D$ with the cardinality of $\omega_{1}$. Then the subgroup $V(D, X)$ is group isomorphic to $\mathbb{V}_{\omega_{1}}$. Then there exists a topology $\sigma$ on $\mathbb{V}_{\omega_{1}}$ such that $V(D, X)$ is topologically isomorphic to $(\mathbb{V}_{\omega_{1}}, \sigma)$. Since $L(D)$ is topologically isomorphic to $L(D, X)$ \cite{Tkachenko} and the topology of $L(D, X)$ is coarse than $V(D, X)$, we have $\sigma$ is finer than the box topology $\tau_{\omega_{1}}$. For each $\alpha<\omega_{1}$ and $n\in \mathbb{N}$, let $a_{\alpha, n}=(x_{\beta})_{\beta<\omega_{1}}$, where $x_{\beta}=\frac{1}{n}$ if $\beta=\alpha$, and $x_{\beta}=0$ if $\beta\neq\alpha$. Put $$Y=\{0\}\cup\{a_{\alpha, n}: \alpha<\omega_{1}, n\in \mathbb{N}\}.$$ Then since $\sigma$ is finer than $\tau_{\omega_{1}}$, it is easy to see that $Y$ is a copy of $S_{\omega_{1}}$ in $(\mathbb{V}_{\omega_{1}, 1}, \sigma|_{\mathbb{V}_{\omega_{1}, 1}})$. Hence $\mbox{sp}_{1}(X)$ contains a copy of $S_{\omega_{1}}$. However, $S_{\omega_{1}}$ is not $csf$-countable, which is a contradiction.
\end{proof}

Theorem~\ref{t66} below gives a characterization of a space $X$ such that $V(X)$ is $snf$-countable, and obtain that $V(X)$ is $snf$-countable if and only if $X$ is finite.

\begin{proposition}\label{p4}
The space $V(\mathbb{N})$ is not $snf$-countable.
\end{proposition}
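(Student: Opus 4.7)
The plan is to reduce the question to a computation in the box topology. Since $V(\N)\cong(\mathbb{V}_{\omega},\mu_{\omega})$ and Protasov's theorem, cited just above the statement, gives $\mu_{\omega}=\tau_{\omega}$, it suffices to show that $(\mathbb{V}_{\omega},\tau_{\omega})$ is not $snf$-countable at the origin $0$. A basic neighborhood of $0$ in the box topology on $\mathbb{V}_{\omega}=\bigoplus_{n\in\omega}\R_{n}$ has the form $U(\delta)=\{(x_{n})\in\mathbb{V}_{\omega}:|x_{n}|<\delta_{n}\text{ for all }n\in\omega\}$, where $\delta=(\delta_{n})$ is an arbitrary sequence of positive reals.

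Suppose, for contradiction, that $\{P_{k}:k\in\N\}$ is a countable $sn$-network at $0$. For each $n\in\N$, consider the sequence $y^{(n)}_{m}=(0,\ldots,0,\tfrac{1}{m},0,\ldots)\in\mathbb{V}_{\omega}$ whose only non-zero entry is $\tfrac{1}{m}$ in coordinate $n$. This sequence converges to $0$ in the box topology, because for any basic neighborhood $U(\delta)$ of $0$ one only has to wait until $\tfrac{1}{m}<\delta_{n}$. Since each $P_{k}$ is a sequential neighborhood of $0$, for every pair $k,n\in\N$ there exists $M(k,n)\in\N$ with $y^{(n)}_{M(k,n)}\in P_{k}$.

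Now diagonalize: set $\epsilon_{n}=\tfrac{1}{M(n,n)+1}$ and form the basic open neighborhood $U=U((\epsilon_{n}))$ of $0$. For each $k\in\N$, taking $n=k$ above, the point $y^{(k)}_{M(k,k)}\in P_{k}$ has $k$-th coordinate $\tfrac{1}{M(k,k)}>\epsilon_{k}$, so $y^{(k)}_{M(k,k)}\notin U$. Hence no $P_{k}$ is contained in $U$, which contradicts clause (iii) in the definition of an $sn$-network, and completes the proof.

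There is no real technical obstacle here: the only thing to notice is that Protasov's identification $\mu_{\omega}=\tau_{\omega}$ makes the topology at $0$ concrete enough to admit a Cantor-style diagonalization. The underlying reason the argument succeeds is that a local base at $0$ in the box topology requires continuum many parameters (one positive real per coordinate), while a countable candidate $sn$-network $\{P_{k}\}$ can always be defeated by placing, for each $k$, a sample point of $P_{k}$ on the $k$-th coordinate axis and then shrinking the $k$-th radius of $U$ below that sample.
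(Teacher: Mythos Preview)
Your proof is correct. Both your argument and the paper's rest on the same convergent sequences $y^{(n)}_{m}=\tfrac{1}{m}e_{n}$ along the coordinate axes, but the packaging differs. The paper assembles the set $Y=\{0\}\cup\{y^{(n)}_{m}:m,n\in\N\}$, verifies (using the explicit base $\mathscr{N}_{\omega}$ for $\mu_{\omega}$) that $Y$ is a copy of $S_{\omega}$, and then cites the known fact that $S_{\omega}$ is not $snf$-countable; since $snf$-countability is hereditary, $V(\N)$ cannot be $snf$-countable either. You instead invoke Protasov's identification $\mu_{\omega}=\tau_{\omega}$ to pass to the box topology, and then run a direct Cantor diagonalization against a putative countable $sn$-network at $0$, avoiding any mention of $S_{\omega}$. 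Your route is more self-contained (no auxiliary result about $S_{\omega}$ needed, and the box-topology base is simpler than $\mathscr{N}_{\omega}$), while the paper's formulation has the advantage of exhibiting a concrete structural obstruction---the embedded fan $S_{\omega}$---which is reused elsewhere in the paper.
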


\begin{proof}
Since $V(\mathbb{N})$ is topologically isomorphic to $(\mathbb{V}_{\omega}, \mu_{\omega})$, it suffices to prove that $(\mathbb{V}_{\omega}, \mu_{\omega})$ is not $snf$-countable. For arbitrary $n, m\in\mathbb{N}$, let $x_{m, n}=(x_{l})_{l\in\mathbb{N}}$, where $x_{l}=\frac{1}{n}$ if $l=m$, and $x_{l}=0$ if $l\neq m$. Put $$Y=\{0\}\cup\{x_{m, n}: m, n\in\mathbb{N}\}.$$ We claim that $Y$ is a copy of $S_{\omega}$ in $V(\mathbb{N})$. It suffices to prove that for an arbitrary infinite subset $B\subset\{x_{m, n}: m, n\in\mathbb{N}\}$ with $|B\cap \{x_{m, n}: n\in\mathbb{N}\}|<\omega$ for each $m\in\mathbb{N}$ we have $0\not\in \overline{B}$. Indeed, there exists a function $\varphi: \mathbb{N}\rightarrow \mathbb{N}$ such that $B\cap \{x_{m, i}: i\geq \varphi(m)\}=\emptyset$ for each $m\in\mathbb{N}$. For each $k\in\omega$, let $$S_{k}=\bigcup_{i<\omega}([-\frac{1}{2^{k+2}\varphi(i)}, \frac{1}{2^{k+2}\varphi(i)}]\times \prod_{j<\omega, j\neq i}\{0\}).$$Let $G=\sum_{k\in\omega}S_{k}$. The set $G$ is an open neighborhood of 0 in $V(\mathbb{N})$ such that $$G\subset\prod_{i\in\omega}[-\frac{1}{2\varphi(i)}, \frac{1}{2\varphi(i)}]=U.$$ However, $U\cap B=\emptyset$. Hence $0\not\in \overline{B}$. Since $S_{\omega}$ is not $snf$-countable, $(\mathbb{V}_{\omega}, \mu_{\omega})$ is not $snf$-countable.
\end{proof}

\begin{theorem}\label{t66}
Let $X$ be a (Tychonoff) space. Then $V(X)$ is $snf$-countable if and only if $X$ is finite.
\end{theorem}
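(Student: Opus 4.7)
The easy direction is immediate: if $X$ is finite, then $V(X)\cong\mathbb{R}^{|X|}$ with the usual Euclidean topology (the unique Hausdorff vector topology on a finite-dimensional real vector space), which is first countable and hence $snf$-countable.

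For the hard direction, I will argue the contrapositive: assuming $X$ is infinite, I will show that $V(X)$ fails $snf$-countability at $0$ (which, by translation invariance, forces failure everywhere). The plan is to mimic Proposition~\ref{p4}: since every infinite Hausdorff space contains an infinite discrete subspace, pick distinct points $\{x_n:n\in\mathbb{N}\}\subseteq X$ forming a discrete subspace of $X$ and consider
$$Y=\{0\}\cup\{\tfrac{1}{m}x_n:m,n\in\mathbb{N}\}\subseteq V(X).$$
Because scalar multiplication is continuous and $x_n\in X\hookrightarrow V(X)$, each branch $(\tfrac{1}{m}x_n)_{m\in\mathbb{N}}$ converges to $0$ in $V(X)$, so that $Y$ inherits countably many convergent sequences at $0$. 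I would then prove that $Y$ with the subspace topology inherited from $V(X)$ is homeomorphic to $S_\omega$, and invoke the hereditariness of $snf$-countability together with the fact (recorded in the excerpt) that $S_\omega$ is not $snf$-countable, to conclude.

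Identifying $Y$ with $S_\omega$ reduces to the following closure statement, exactly analogous to the main claim inside Proposition~\ref{p4}: for every $B\subseteq\{\tfrac{1}{m}x_n:m,n\in\mathbb{N}\}$ with $|B\cap\{\tfrac{1}{m}x_n:m\in\mathbb{N}\}|<\omega$ for each $n$, one has $0\notin\overline{B}$ in $V(X)$. Choose $\varphi:\mathbb{N}\to\mathbb{N}$ with $B\cap\{\tfrac{1}{m}x_n:m\geq\varphi(n)\}=\emptyset$; the aim is to exhibit an open neighborhood $W\ni 0$ in $V(X)$ disjoint from $B$. Mimicking the construction from the Fact, for each $k\in\omega$ I would choose a positive continuous ``gauge'' $\lambda_k:X\to(0,\infty)$ with $\lambda_k(x_n)\leq\tfrac{1}{2^{k+2}\varphi(n)}$ (using Tychonoff separation together with the discreteness of $\{x_n\}$ to guarantee its existence), set $S_k=\{\lambda x:x\in X,\,|\lambda|\leq\lambda_k(x)\}$, and take $W=\sum_{k\in\omega}S_k$. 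A direct bookkeeping argument, identical in spirit to Proposition~\ref{p4}, then shows that any $v\in W$ of the form $\tfrac{1}{m}x_n$ must satisfy $\tfrac{1}{m}\leq\sum_k\lambda_k(x_n)\leq\tfrac{1}{2\varphi(n)}$, forcing $m\geq 2\varphi(n)>\varphi(n)$ and therefore $W\cap B=\emptyset$.

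The main obstacle is the construction of the continuous gauges $\lambda_k$ in the generality of an arbitrary Tychonoff space $X$: one needs strictly positive continuous functions on $X$ taking specified small values at the $x_n$'s. When $\{x_n\}$ is a closed discrete subspace of a normal $X$, this follows immediately from the Tietze extension theorem, but for general Tychonoff $X$ (with only countable discreteness of $\{x_n\}$) more care is required. Once the gauges are in place, the remainder of the argument parallels Proposition~\ref{p4} and yields the copy of $S_\omega$ inside $V(X)$, completing the proof.
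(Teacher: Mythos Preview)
Your approach differs from the paper's and carries a genuine gap beyond the one you already flag. The paper's proof is essentially a one-liner modulo Proposition~\ref{p4}: by \cite[Theorem~4.1]{GM2017}, whenever $X$ is infinite the space $V(X)$ contains a closed vector subspace topologically isomorphic to $V(\mathbb{N})$; since $snf$-countability is hereditary to subspaces, this would force $V(\mathbb{N})$ to be $snf$-countable, contradicting Proposition~\ref{p4}. This reduction to the discrete case completely avoids any need to describe neighborhoods of $0$ in $V(X)$ for non-discrete $X$.

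Your direct construction runs into two obstacles. The first you acknowledge: for an arbitrary Tychonoff $X$ and a merely discrete (not closed discrete) countable set $\{x_n\}$, the required strictly positive continuous gauges $\lambda_k$ need not exist---if a subsequence of $(x_n)$ accumulates at some $p\in X$ along indices where $\varphi$ is unbounded, continuity forces $\lambda_k(p)=0$; and an infinite \emph{closed} discrete subset need not exist (take $X$ compact). The second obstacle you do not address: even granting the $\lambda_k$, you assert that $W=\sum_k S_k$ is an open neighborhood of $0$ in $V(X)$, but the ``Fact'' you are mimicking establishes a base at $0$ only for $(\mathbb{V}_\kappa,\mu_\kappa)\cong V(D)$ with $D$ discrete. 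No analogous description of a neighborhood base at $0$ in $V(X)$ in terms of continuous gauge functions on a general Tychonoff $X$ is provided in the paper or in \cite{GM2017}, so this step is unjustified as written. The embedding theorem from \cite{GM2017} is exactly the device that lets one bypass both difficulties by transporting the question to $V(\mathbb{N})$, where Proposition~\ref{p4} already does the work.
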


\begin{proof}
If $X$ is finite, then it is obvious. Assume that $V(X)$ is $snf$-countable and $X$ is infinite. Then it follows from \cite[Theorem 4.1]{GM2017} that $V(X)$ contains a closed vector subspace which is topologically isomorphic to $V(\mathbb{N})$. Hence $V(\mathbb{N})$ is $snf$-countable, which is a contradiction with Proposition~\ref{p4}.
\end{proof}

We shall give a characterization of a $k^*$-metrizable space $X$ such that $V(X)$ is a $k$-space, which gives an affirmative answer to Question~\ref{t1} if $X$ is a $k^*$-metrizable space. Moreover, the characterization of a non-metrizable $k^*$-metrizable space $X$ is also given such that if the $k$-property of $\mbox{sp}_{4}(X)$ implies the $k$-property of $V(X)$. First, we shall prove some results, which will be used in our proof.

\begin{theorem}
Let $X$ be a submetrizable space. Then $V(X)$ is submetrizable.
\end{theorem}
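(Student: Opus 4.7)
The plan is to construct a continuous injective linear map from $V(X)$ into a Banach space; since any Banach space is a metrizable topological vector space, pulling its topology back along such a map yields a coarser Hausdorff metrizable topology on $V(X)$, which is exactly what submetrizability of $V(X)$ requires. Since $X$ is submetrizable, I can fix a continuous metric $d$ on $X$ (equivalently, $X$ admits a continuous bijection onto the metric space $(X,d)$).

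For the target I will take the Banach space $B=\mathrm{Lip}_b(X,d)^{*}$, the continuous dual of the space of bounded Lipschitz functions on $(X,d)$ under the norm $\|f\|=\|f\|_{\infty}+\mathrm{Lip}_d(f)$. The evaluation map $\iota:X\to B$, $\iota(x)=\delta_x$, is continuous in the norm topology of $B$: whenever $\|f\|\leq 1$ one has $|f(x)-f(y)|\leq d(x,y)$, so $\|\delta_x-\delta_y\|_{B}\leq d(x,y)$, and $d$ is continuous on the original topology of $X$. By the universal property of $V(X)$ applied to the topological vector space $B$, this $\iota$ extends uniquely to a continuous linear map $\bar\iota:V(X)\to B$.

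The main step is to verify that $\bar\iota$ is injective. Given $v=\sum_{i=1}^{n}\lambda_i x_i\in V(X)$ with distinct $x_i$, I set $\delta=\frac{1}{2}\min_{i\neq j}d(x_i,x_j)>0$ and consider the Lipschitz bumps $f_k(x)=\max\bigl(0,\,1-d(x,x_k)/\delta\bigr)$, which belong to $\mathrm{Lip}_b(X,d)$ and satisfy $f_k(x_k)=1$ and $f_k(x_i)=0$ for $i\neq k$. Evaluating the functional $\bar\iota(v)=\sum_i\lambda_i\delta_{x_i}$ at $f_k$ returns $\lambda_k$, so $\bar\iota(v)=0$ forces every $\lambda_k=0$, hence $v=0$.

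Since $B$ is a Banach space it is a metrizable topological vector space, and the continuous linear injection $\bar\iota$ witnesses that $V(X)$ is submetrizable. The delicate point in the argument is precisely the choice of target that makes the extension injective: a Kuratowski-style isometric embedding of $(X,d)$ into $C_b(X)$ would be continuous but typically produces linearly dependent images for more than two points, so its universal extension to $V(X)$ need not be injective. Passing to the Lipschitz dual is what makes the bump functions of the previous paragraph available and forces linear independence of the $\delta_{x_i}$.
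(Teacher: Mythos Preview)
Your proof is correct and takes a genuinely different, more self-contained route than the paper. The paper factors through the free locally convex space: it passes from $V(X)$ to $V(M)$ (with $M$ a metric space under $X$) and then to $L(M)$, invoking Uspenski\u{\i}'s result that the Graev extension $\widehat{d}$ of the metric $d$ metrizes a topology coarser than that of $L(M)$; injectivity of $V(X)\to L(M)$ is then automatic because both free objects share the same underlying vector space. You instead embed $X$ directly into the Banach space $\mathrm{Lip}_b(X,d)^{*}$ via Dirac functionals and verify injectivity of the linear extension by hand with Lipschitz bump functions. Your argument avoids any appeal to the theory of $L(X)$ or to the Graev extension theorem, at the cost of a short explicit computation; the paper's argument is shorter on the page but relies on a nontrivial external result. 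Conceptually the two are cousins --- the closed linear span of the $\delta_x$ in $\mathrm{Lip}_b(X,d)^{*}$ is essentially the Arens--Eells (Lipschitz-free) space, whose norm is closely related to the Graev metric --- but your presentation does not need to invoke that connection.
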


\begin{proof}
Since $X$ is submetrizable, there exists a metric space $M$ such that $i: X\rightarrow M$ is an one-to-one continuous mapping. Then the mapping $\hat{i}: V(X)\rightarrow V(M)$ is an one-to-one continuous mapping. Let $d$ be a metric of $M$, and $\widehat{d}$ the Graev extention of $d$ in $L(M)$. Then $(L(M), \widehat{d})$ is a metric space \cite{U1991}. Hence $L(M)$ is submetrizable. Since the mapping $\hat{i}: V(X)\rightarrow L(M)$ is an one-to-one continuous mapping. Thus $V(X)$ is submetrizable.
\end{proof}

\begin{corollary}\label{c0}
Let $X$ be a submetrizable space. Then $V(X)$ is a $k$-space if and only if $V(X)$ is sequential.
\end{corollary}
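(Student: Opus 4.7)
The plan is to leverage the preceding theorem, which gives us that $V(X)$ is submetrizable whenever $X$ is. Since every sequential space is automatically a $k$-space, only the forward implication needs work; the whole argument then reduces to a standard fact about submetrizable $k$-spaces.

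First I would observe that in a submetrizable Hausdorff space, every compact subspace is metrizable. Indeed, from the one-to-one continuous map $\hat{i}: V(X) \rightarrow L(M)$ constructed in the previous proof, the restriction of $\hat{i}$ to any compact set $K \subseteq V(X)$ is a continuous bijection from a compact space onto a Hausdorff space, hence a homeomorphism onto its image in the metric space $L(M)$. In particular every compact subset of $V(X)$ is first countable, so sequentially open subsets of any compact $K \subseteq V(X)$ are open in $K$.

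Next, assuming $V(X)$ is a $k$-space, I would take an arbitrary sequentially open $U \subseteq V(X)$ and show it is open. For each compact $K \subseteq V(X)$, the trace $U \cap K$ is sequentially open in $K$: any sequence in $K$ converging to some point of $U \cap K$ also converges in $V(X)$, hence is eventually in $U$, hence eventually in $U \cap K$. By the first step, $U \cap K$ is then open in $K$. Since $V(X)$ is a $k$-space, $U$ itself is open, proving sequentiality.

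There is no real obstacle here; the statement is essentially a packaging of the standard principle \emph{submetrizable plus $k$-space implies sequential}, and all the substance is already delivered by the previous theorem. The only thing to be careful about is to spell out that the continuous bijection onto $L(M)$ (rather than a metric directly on $V(X)$) suffices, since this still makes compacta in $V(X)$ metrizable.
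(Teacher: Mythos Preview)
Your argument is correct and is precisely the standard reasoning the paper has in mind: the corollary is stated in the paper without proof, as an immediate consequence of the preceding theorem together with the well-known fact that a submetrizable $k$-space is sequential. Your write-up simply makes that implicit step explicit, via the observation that compacta in a submetrizable space are metrizable.
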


\begin{lemma}\label{t1}
For any uncountable cardinal $\kappa$, the tightness of $(\mathbb{V}_{\kappa, 2}, \mu_{\kappa}|_{\mathbb{V}_{\kappa, 2}})$ is uncountable.
\end{lemma}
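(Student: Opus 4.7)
The plan is to exhibit an uncountable set $A\subseteq\mathbb{V}_{\kappa,2}$ together with a point $p$ such that $p\in\overline{A}^{\mu_\kappa}$ but $p\notin\overline{B}^{\mu_\kappa}$ for every countable $B\subseteq A$; the natural candidate is $p=0$. The first step is to translate the Fact recalled earlier into a combinatorial membership criterion: a basic $\mu_\kappa$-neighborhood of $0$ has the form $U_\lambda=\sum_{k<\omega}S_k$ parametrized by positive reals $\lambda_\gamma^k$ ($\gamma<\kappa$, $k<\omega$), and a support-$2$ element $c_1 e_\alpha + c_2 e_\beta$ lies in $U_\lambda$ if and only if there exist disjoint $K_\alpha,K_\beta\subseteq\omega$ with $\sum_{k\in K_\alpha}\lambda_\alpha^k\ge|c_1|$ and $\sum_{k\in K_\beta}\lambda_\beta^k\ge|c_2|$. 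Membership of $0$ in $\overline A$ therefore becomes the assertion that no choice of $\lambda$ can simultaneously violate this partition-sum condition on every element of $A$.

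For the candidate set I would fix an injection $(\alpha,n)\mapsto\gamma(\alpha,n)$ from $\omega_1\times\omega$ into $\kappa\setminus\omega_1$ and consider
\[
A=\bigl\{\tfrac1n\bigl(e_\alpha+e_{\gamma(\alpha,n)}\bigr):\alpha<\omega_1,\ n\in\omega\bigr\}.
\]
Given a countable $B\subseteq A$, the supports of elements of $B$ lie in a countable $\kappa_0\subseteq\kappa$, and by injectivity of $\gamma$ every partner index $\gamma(\alpha,n)$ arising in $B$ is used by a unique pair $(\alpha,n)$. Setting $\lambda_{\gamma(\alpha,n)}^k=\tfrac{1}{(n+1)\cdot 2^{k+1}}$ on exactly these partner indices, and arbitrary positive values elsewhere, gives $\sum_k\lambda_{\gamma(\alpha,n)}^k=\tfrac{1}{n+1}<\tfrac1n$, so no admissible $K_\beta$ can meet the threshold $\tfrac1n$ on the spoke coordinate and $U_\lambda\cap B=\emptyset$, yielding $0\notin\overline{B}^{\mu_\kappa}$.

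The main obstacle, and the step that will dominate the proof, is showing $0\in\overline{A}^{\mu_\kappa}$. The difficulty is that the adversary is equally free to use the rule $\lambda_{\gamma(\alpha,n)}^k=\tfrac{1}{(n+1)\cdot 2^{k+1}}$ for \emph{every} $(\alpha,n)\in\omega_1\times\omega$, not just for those in a countable subset, so the naive $A$ above likely must be enriched with an extra $\omega$-indexed parameter $m$ (replacing $\tfrac1n$ by $\tfrac1m$ and $\gamma(\alpha,n)$ by $\gamma(\alpha,n,m)$) to create enough redundancy that no single adversarial $\lambda$ can defeat every $(\alpha,n,m)$-tuple at once. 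The verification then proceeds by a two-level pigeonhole: first on $\omega_1$, finding an $\alpha$ at which the positive-valued function $\alpha\mapsto\sum_k\lambda_\alpha^k$ is bounded below by some fixed $\epsilon>0$ on an uncountable set, and then on $\omega^2$, locating $(n,m)$ so that the partner coordinate $\gamma(\alpha,n,m)$ satisfies the analogous lower bound relative to a \emph{disjoint} $K$-partition. Coordinating this double pigeonhole with the partition-disjointness condition inherent in $\mu_\kappa$-membership — which makes $\mu_\kappa$ strictly finer than the box topology on $\mathbb V_\kappa$ — is the technical heart of the argument, and is where I expect to spend the most effort.
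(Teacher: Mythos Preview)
Your proposal has a genuine gap at the step you yourself flag as the main obstacle: the set you construct, in either the naive or the ``enriched'' form, does \emph{not} have $0$ in its $\mu_\kappa$-closure. You correctly note that the adversary defeats the naive $A$ by putting $\lambda_{\gamma(\alpha,n)}^k=\tfrac{1}{(n+1)2^{k+1}}$ at every partner coordinate; but exactly the same strategy defeats the enriched set $A'=\{\tfrac{1}{m}(e_\alpha+e_{\gamma(\alpha,n,m)})\}$. Each partner index $\beta=\gamma(\alpha,n,m)$ is used by a unique element of $A'$ and, via the injection $\gamma$, it \emph{knows} the coefficient $\tfrac1m$; so the adversary may set $\lambda_\beta^k=\tfrac{1}{(m+1)2^{k+2}}$, giving $\sum_k\lambda_\beta^k=\tfrac{1}{2(m+1)}<\tfrac1m$, and then no $K_\beta$ whatsoever can meet the threshold $\tfrac1m$. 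Thus no element of $A'$ lies in the corresponding basic neighborhood, and $0\notin\overline{A'}$. Your proposed two-level pigeonhole does not rescue this: the first level (finding uncountably many $\alpha$ with $\sum_k\lambda_\alpha^k\ge\varepsilon$) is true but irrelevant, since the obstruction lives entirely on the partner coordinate $\beta$, where the total mass is already below the required threshold regardless of how $K_\alpha$ and $K_\beta$ are partitioned. The structural defect is that any construction in which each partner index appears in only one element---or, more generally, in which the partner index determines the coefficient---is killed by this attack; and dropping that injectivity would make the countable-subset half of the argument fail instead, since a single convergent sequence $\tfrac1m(e_\alpha+e_\beta)\to 0$ would then lie inside $A'$.

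The paper's argument avoids this by an essentially different combinatorial device. Both coordinates range over $\omega_1$ and are heavily reused: one takes
\[
X=\Bigl\{\tfrac{1}{n+1}(e_\alpha+e_\beta):\alpha,\beta<\omega_1,\ n\in A_\alpha\cap B_\beta\Bigr\},
\]
where $\{A_\alpha\}_{\alpha<\omega_1}$ and $\{B_\beta\}_{\beta<\omega_1}$ are families of infinite subsets of $\omega$ with every $A_\alpha\cap B_\beta$ finite and with the unsplittability property that no $A\subseteq\omega$ makes all $A_\alpha\setminus A$ and all $B_\beta\cap A$ simultaneously finite (a Luzin-type gap, available in ZFC by \cite[Theorem~20.2]{JW1997}). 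Unsplittability is exactly what defeats every adversarial $\lambda$: from the given neighborhood one extracts a threshold function $\varphi:\omega_1\to\omega$, passes to the cofinite tails $A_\alpha'=\{n\in A_\alpha:n\ge\varphi(\alpha)+1\}$ and $B_\beta'=\{n\in B_\beta:n\ge\varphi(\beta)+1\}$, and uses unsplittability to find $\alpha,\beta$ with $A_\alpha'\cap B_\beta'\neq\emptyset$; any $n$ in this intersection gives an element of $X$ lying in $S_0+S_1$. Conversely, the finiteness of each $A_\alpha\cap B_\beta$ is what lets one block any countable subset by a single $\psi$. This gap structure is the idea missing from your approach; without it (or something playing the same role) the argument cannot be completed.
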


\begin{proof}
By \cite[Theorem 20.2]{JW1997}, we can find two families $\mathscr{A}=\{A_{\alpha}: \alpha\in\omega_{1}\}$ and $\mathscr{B}=\{B_{\alpha}: \alpha\in\omega_{1}\}$ of infinite subsets of $\omega$ such that

\smallskip
(a) $A_{\alpha}\cap B_{\beta}$ is finite for all $\alpha, \beta<\omega_{1}$;

(b) for no $A\subset \omega$, all the sets $A_{\alpha}\setminus A$ and $B_{\alpha}\cap A$, $\alpha\in\omega_{1}$ are finite.

\smallskip
For arbitrary $\alpha, \beta\in \omega_{1}$ and $n\in\omega$, let $$c_{\alpha, \beta, n}=(x_{\gamma})_{\gamma<\kappa}\in\mathbb{V}_{\kappa},$$where $x_{\gamma}=\frac{1}{n+1}$ if $\gamma=\alpha\ \mbox{or}\ \beta$, and $x_{\gamma}=0$ if $\gamma\not\in\{\alpha, \beta\}$. Put $$X=\{c_{\alpha, \beta, n}: \alpha, \beta\in \omega_{1}, n\in A_{\alpha}\cap B_{\beta}\}.$$Obviously, we have $0\not\in X$. We shall prove that $0$ belongs to the closure of $X$ in $(\mathbb{V}_{\kappa, 2}, \mu_{\kappa}|_{\mathbb{V}_{\kappa, 2}})$, but $0$ does not belong to the closure of $B$ for any countable subset $B$ of $X$ in $(\mathbb{V}_{\kappa, 2}, \mu_{\kappa}|_{\mathbb{V}_{\kappa, 2}})$.

Since $(\mathbb{V}_{\kappa, 2}, \mu_{\kappa}|_{\mathbb{V}_{\kappa, 2}\mathbb{V}_{\kappa, 2}})$ is closed in $(\mathbb{V}_{\kappa}, \mu_{\kappa})$ and $X\subset \mathbb{V}_{\kappa, 2}$, we shall prove that $0$ belongs to the closure of $X$ in $(\mathbb{V}_{\kappa}, \mu_{\kappa})$. Then it suffices to prove $X\cap\sum_{k\in\omega}S_{k}\neq\emptyset$ for an arbitrary sequence $\{S_{k}\}_{k\in\omega}$ in $\mathcal{S}_{\kappa}$. Obviously, we can choose $S_{0}^{\prime}\in\mathcal{S}_{\kappa}$ such that $S_{0}^{\prime}\subset S_{0}$ and $S_{0}^{\prime}\subset S_{1}$. Then there exists a function $\varphi: \omega_{1}\rightarrow\omega$ such that $$W=\bigcup_{i<\kappa}([-\frac{1}{\varphi(i)+1}, \frac{1}{\varphi(i)+1}]\times \prod_{j<\kappa, j\neq i}\{0\})\subset S_{0}^{\prime}.$$Then $\sum_{k\in\omega}W_{k}\subset\sum_{k\in\omega}S_{k}$, where $W_{0}=W_{1}=W$ and $W_{k}=S_{k}$ for each $k>1$. We claim that $X\cap\sum_{k\in\omega}W_{k}\neq\emptyset$, hence $\sum_{k\in\omega}S_{k}\cap X\neq\emptyset$. Indeed, for each $\alpha\in\omega_{1}$, put
$$A_{\alpha}^{\prime}=\{n\in A_{\alpha}: n\geq \varphi(\alpha)+1\}\ \mbox{and}\ B_{\alpha}^{\prime}=\{n\in B_{\alpha}: n\geq \varphi(\alpha)+1\}.$$ It follows that there exist $\alpha, \beta\in\omega_{1}$ such that $A_{\alpha}^{\prime}\cap B_{\beta}^{\prime}\neq\emptyset$. If not, then we have $$\bigcup\{A_{\alpha}^{\prime}: \alpha\in\omega_{1}\}\cap \bigcup\{B_{\alpha}^{\prime}: \alpha\in\omega_{1}\}=\emptyset.$$ Put $A=\bigcup\{A_{\alpha}^{\prime}: \alpha\in\omega_{1}\}$. We get a contradiction with (b). Therefore, choose $n\in A_{\alpha}^{\prime}\cap B_{\beta}^{\prime}$. Then it is obvious that $$c_{\alpha, \beta, n}\in X\cap (W+W)\subset \sum_{k\in\omega}W_{k}\subset \sum_{k\in\omega}S_{k}.$$

Finally we prove that for an arbitrary countable subset $B$ of $X$ the point $0$ does not belongs to the closure of $B$ in $(\mathbb{V}_{\kappa}, \mu_{\kappa})$. Take an arbitrary countable subset $B$ of $X$. Then there exists an cardinal $\eta<\omega_{1}$ such that $$B\subset \{c_{\alpha, \beta, n}: \alpha, \beta<\eta, n\in\omega\}.$$ Without loss of generality, we may assume $\eta=\omega$ (otherwise order $\eta$ as $\omega$). Define a function $\psi: \omega_{1}\rightarrow\omega$ by $$\psi(m)=2+\max\{A_{k}\cap B_{l}: k, l\leq m\}$$ for each $m<\omega$ and $\psi(\alpha)=1$ if $\omega\leq\alpha<\omega_{1}$. For each $k\in\omega$, put $$G_{k}=\bigcup_{i<\kappa}([-\frac{1}{2^{k+1}\psi(i)}, \frac{1}{2^{k+1}\psi(i)}]\times \prod_{j<\kappa, j\neq i}\{0\}).$$ Then $\sum_{k\in\omega}G_{k}\in\mathscr{N}_{\kappa}$ and $\sum_{k\in\omega}G_{k}\cap B=\emptyset.$ Indeed, assume $c_{\alpha, \beta, n}\in B\cap \sum_{k\in\omega}G_{k}$, and then since $c_{\alpha, \beta, n}\in B$, we have $n\in A_{\alpha}\cap B_{\beta}$. Moreover, we can assume that $\alpha\leq\beta$. Obviously, $\psi(\beta)>\max(A_{\alpha}\cap B_{\beta})+1$, thus $\frac{1}{n+1}>\frac{1}{\psi(\beta)}$. Then it follows from the definition of each $G_{k}$ that $c_{\alpha, \beta, n}\not\in\sum_{k\in\omega}G_{k}$.
\end{proof}

\begin{lemma}\label{t2}
Let $D$ be an uncountable discrete space. Then the tightness of $\mbox{sp}_{2}(D)$ is uncountable. In particular, $V(D)$ is uncountable.
\end{lemma}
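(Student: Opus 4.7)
The plan is to reduce the claim to Lemma \ref{t1} via the canonical topological isomorphism $f\colon V(D)\to(\mathbb{V}_\kappa,\mu_\kappa)$, where $\kappa=|D|$, described in the discussion preceding Theorem \ref{csf}. All of the combinatorial work has already been packaged into Lemma \ref{t1}, so what remains is to identify $\mbox{sp}_2(D)$ with $\mathbb{V}_{\kappa,2}$ under $f$ and then to transfer uncountable tightness from $\mbox{sp}_2(D)$ up to $V(D)$.

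The map $f$ sends $\lambda_1 x_{\alpha_1}+\cdots+\lambda_n x_{\alpha_n}$ to the tuple $(y_\beta)_{\beta<\kappa}$ with $y_{\alpha_i}=\lambda_i$ and all other coordinates $0$. I would check that $f$ restricts to a homeomorphism of $\mbox{sp}_2(D)$ onto $\mathbb{V}_{\kappa,2}$ equipped with its subspace topology: an element $\lambda_1 x_{\alpha_1}+\lambda_2 x_{\alpha_2}\in\mbox{sp}_2(D)$ with $\lambda_i\in[-2,2]$ is sent to a tuple supported on at most two indices with entries in $[-2,2]$, that is, to an element of $\mathbb{V}_{\kappa,2}$, and conversely every element of $\mathbb{V}_{\kappa,2}$ plainly arises in this way. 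Because $D$ is uncountable, $\kappa$ is uncountable, so Lemma \ref{t1} implies that $(\mathbb{V}_{\kappa,2},\mu_\kappa|_{\mathbb{V}_{\kappa,2}})$, and hence $\mbox{sp}_2(D)$, has uncountable tightness.

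For the ``in particular'' clause, which I read as the assertion that the tightness of $V(D)$ is uncountable, I would invoke the elementary fact that countable tightness is hereditary to subspaces: for any $B\subseteq\mbox{sp}_2(D)$ one has $\overline{B}^{\,\mbox{sp}_2(D)}=\overline{B}^{\,V(D)}\cap\mbox{sp}_2(D)$, so any pair $(A,y)$ witnessing uncountable tightness in $\mbox{sp}_2(D)$ is automatically a witness in $V(D)$. I do not foresee a serious obstacle in this argument; all the real content has been absorbed into Lemma \ref{t1}, and the present lemma is essentially a translation exercise.
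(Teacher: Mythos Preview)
Your proposal is correct and follows exactly the paper's approach: the paper's proof simply asserts that $\mbox{sp}_{2}(D)$ is homeomorphic to $(\mathbb{V}_{\kappa,2},\mu_{\kappa}|_{\mathbb{V}_{\kappa,2}})$ and then invokes Lemma~\ref{t1}. One minor technical aside (which the paper also glosses over): when $x_{\alpha_1}=x_{\alpha_2}$ the combined coefficient may land in $[-4,4]$, so $f(\mbox{sp}_{2}(D))$ strictly contains $\mathbb{V}_{\kappa,2}$ rather than equalling it; this is harmless, since $\mathbb{V}_{\kappa,2}$ still embeds as a subspace of $\mbox{sp}_{2}(D)$ and, as you already note, countable tightness is hereditary.
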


\begin{proof}
Let $D$ be the cardinality of $\kappa$. It is easy to see that $\mbox{sp}_{2}(D)$ homeomorphic to $(\mathbb{V}_{\kappa, 2}, \mu_{\kappa}|_{\mathbb{V}_{\kappa, 2}})$. By Lemma~\ref{t1}, the tightness of $\mbox{sp}_{2}(D)$ is uncountable.
\end{proof}

The following Lemma~\ref{llll6} improves a well-known result in \cite{AOP1989}. First, we recall a concept. A space is called {\it $\aleph_1$-compact} if every uncountable subset of $X$ has a cluster point.

\begin{lemma}\label{llll6}
Let $X$ be a $k^*$-metrizable space. Then $A(X)$ is a $k$-space if and only if $X$ is the topological sum of a $k_{\omega}$-space and a discrete space.
\end{lemma}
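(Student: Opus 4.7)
The plan is to prove both directions by adapting the classical Arhangel'skii--Okunev--Pestov argument \cite{AOP1989} (which handles the metrizable case) to $k^*$-metrizable spaces, making essential use of the characterization of regular $k^*$-metrizable spaces as those admitting a $\sigma$-compact-finite $k$-network (property~(4) listed earlier).

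For the sufficiency direction, suppose $X=Y\oplus D$ with $Y$ a $k_{\omega}$-space and $D$ discrete. Then $A(X)$ is algebraically generated by the canonical images of $Y$ and $D$. Since $A(Y)$ is a $k_{\omega}$-space (a classical fact for free abelian topological groups over $k_{\omega}$-spaces), one writes $A(X)$ as a countable union of sets of the form $K+F$, where $K$ is a compact subset of $A(Y)$ and $F$ is a finite $\mathbb{Z}$-combination of points from $D$. A standard patching argument, comparing compact subsets of $A(X)$ with these pieces, shows $A(X)$ is a $k$-space.

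For the necessity direction, assume $A(X)$ is a $k$-space. Since $X$ embeds as a closed subspace of $A(X)$, $X$ itself is a $k$-space, and by property~(3) of $k^*$-metrizable spaces, $X$ is sequential. Let $X'$ denote the non-isolated points of $X$ and $D=X\setminus X'$ the isolated ones; $D$ is automatically open in $X$. The task is to establish (i) $X'$ is a $k_{\omega}$-space, and (ii) $D$ is also closed in $X$, so that $X=X'\oplus D$. For (ii), if a sequence of isolated points converged to some $x\in X'$, one could pair it with a sequence from $X'$ converging to $x$ (available by sequentiality) and assemble an Arens $S_{2}$-type subset of $A(X)$ that is sequentially closed but not closed, contradicting the $k$-property. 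For (i), one uses the $\sigma$-compact-finite $k$-network $\mathscr{P}=\bigcup_{n}\mathscr{P}_{n}$ of $X'$: if no countable family of compact subsets of $X'$ detects closedness in $X'$ (i.e., $X'$ fails to be $k_{\omega}$), then an uncountable almost-disjoint family of compact witnesses can be extracted and glued together, via $+$ and $-$ combinations, into a subset of $A(X)$ that is sequentially closed but not closed.

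The main obstacle is step~(i): passing from the $\sigma$-compact-finite control of the $k$-network of $X'$ (which replaces the metric local base used in \cite{AOP1989}) to an explicit pathological subset of $A(X)$ witnessing the failure of the $k$-property. The delicate point is controlling at which finite word-length of $A(X)$ the pathology first appears, so that its intersection with compact subsets of $A(X)$ can be analyzed effectively; this is where the $k^*$-metrizability hypothesis genuinely replaces metrizability, and the argument must be phrased entirely in terms of the compact-finite $k$-network rather than any countable local base.
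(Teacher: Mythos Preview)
Your proposal has a genuine structural gap: the decomposition you aim for, $X=X'\oplus D$ with $D$ the set of \emph{all} isolated points, is not what the lemma asserts and is in fact false in cases where the lemma's conclusion holds. Take $X=\{0\}\cup\{1/n:n\in\mathbb{N}\}$ with the usual topology. This is a compact metrizable space, so $A(X)$ is a $k_{\omega}$-space and certainly a $k$-space. Here $X'=\{0\}$ and $D=\{1/n:n\in\mathbb{N}\}$, but $D$ is \emph{not} closed in $X$, so $X\neq X'\oplus D$. The correct decomposition guaranteed by the lemma is $X=Y\oplus D'$ where $Y$ is a $k_{\omega}$-space that may well contain isolated points; in this example $Y=X$ and $D'=\emptyset$. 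Your step~(ii) cannot succeed because its conclusion is false. Moreover, the mechanism you propose for~(ii) breaks down already here: there is no nontrivial sequence in $X'=\{0\}$ converging to $0$ to ``pair'' with the isolated points, so no $S_{2}$ can be manufactured.

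The paper proceeds quite differently. Using that $X$ has a compact-countable $k$-network and that $X^{2}$ is a $k$-space (since $A(X)$ is), it invokes a dichotomy from \cite{LiuT1998}: $X$ is either first-countable or locally $k_{\omega}$. In the first case $X$ is metrizable (being $k^{*}$-metrizable and first-countable), and the classical result of \cite{AOP1989} applies directly. In the locally $k_{\omega}$ case, one upgrades the $k$-network to a star-countable one and deduces that $X$ is a paracompact $\sigma$-space; the key step is then to show $X'$ is $\aleph_{1}$-compact. If not, an uncountable closed discrete subset of $X'$ yields, after taking a quotient identifying those points, a space $Y$ containing a closed copy of $S_{\omega_{1}}$, so that $A(Y)$ contains a closed copy of $S_{\omega_{1}}\times S_{\omega_{1}}$, contradicting sequentiality. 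Finally, $X'$ Lindel\"of together with local $k_{\omega}$-ness gives the required topological sum. The point is that the $k_{\omega}$ summand absorbs a neighborhood of $X'$, not merely $X'$ itself; your outline never produces such a neighborhood.
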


\begin{proof}
Clearly, it suffices to prove the necessity. Assume that $A(X)$ is a $k$-space, which also implies that $X^{2}$ is a $k$-space.
Since $X$ is a $k^*$-metrizable space, $X$ has a compact-countable $k$-network. Then it follows from \cite[Theorem 3.4]{LiuT1998} that $X$ is either first-countable or locally $k_{\omega}$. If $X$ is first-countable, then $X$ is metrizable since $X$ is a $k^*$-metrizable space. Then it follows from \cite{AOP1989} that $X$ is locally compact and the set of all non-isolated points $X^{\prime}$ of $X$ is separable. Therefore, it easily see that $X$ can be represented as $X=X_{0}\bigoplus D$, where $X_{0}$ is a $k_{\omega}$-space and $D$ is a discrete space.

Assume that $X$ is locally $k_{\omega}$. Since $X$ is a $k^*$-metrizable space, then there exists a compact countable $k$-network consisting of sets with compact closures, hence $X$ has a star-countable $k$-network. Then it follows from \cite[Corollary 2.4]{Sakai1997} that $X$ is a paracompact $\sigma$-space, then $A(X)$ is also a paracompact $\sigma$-space by \cite[Theorem 7.6.7]{AT2008}. Therefore, $A(X)$ is a sequential space since $A(X)$ is a paracompact $\sigma$-space. We claim that the set $X^{\prime}$ of all non-isolated points is $\aleph_{1}$-compact.

Suppose not, then there exists an uncountable closed discrete subset $\{x_{\alpha}: \alpha<\omega_{1}\}$ of $X^{\prime}$. Since $X$ is paracompact, there exists a family of discrete open subsets $\{U_{\alpha}: \alpha<\omega_{1}\}$ such that $x_{\alpha}\in U_{\alpha}$ for each $\alpha<\omega_{1}$. Since $A(X)$ is a $\sigma$ and $k$-space, the space $X$ is sequential. Then for each $\alpha<\omega_{1}$ we can take a nontrivial convergent sequence $\{x_{\alpha, n}: n\in\mathbb{N}\}\subset U_{\alpha}$ with the limit point $x_{\alpha}$. Let $Y$ be the quotient space by identifying all the points $\{x_{\alpha}: \alpha<\omega_{1}\}$ to a point $\{\infty\}$. Then the natural mapping $A(X)\rightarrow A(Y)$ is quotient, hence $A(Y)$ is a sequential space. Then it is easy to check that $Y$ contains a closed copy of $S_{\omega_{1}}$, hence $A(Y)$ contains a closed a copy of $S_{\omega_{1}}\times S_{\omega_{1}}$, which is a contradiction with \cite[Corollary 7.6.23]{AT2008}. Therefore, the set $X^{\prime}$ of all non-isolated points is $\aleph_{1}$-compact. Then $X^{\prime}$ is a Lindel\"{o}f space. Therefore, it easily check that $X$ is the topological sum of a $k_{\omega}$-space with a discrete space since $X^{\prime}$ is a Lindel\"{o}f space and $X$ is locally $k_{\omega}$.
\end{proof}

The following Lemma~\ref{La} improves a well-known result in \cite{Y1993}.

\begin{lemma}\label{La}
Let $X$ be a $k^*$-metrizable space. Then the following statements are equivalent:
 \begin{enumerate}
\item $A_{4}(X)$ is a $k$-space;
\item each $A_{n}(X)$ is a $k$-space;
\item $X$ satisfies at least one of the following conditions:\\
(a) $X$ is the topological sum of a $k_\omega$-space and a discrete space;\\
(b) $X$ is metrizable and $X^{\prime}$ is compact.
  \end{enumerate}
\end{lemma}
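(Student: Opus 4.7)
The plan is to establish the equivalences in the cyclic order $(2)\Rightarrow(1)\Rightarrow(3)\Rightarrow(2)$. The implication $(2)\Rightarrow(1)$ is immediate by specialising $n=4$. For $(3)\Rightarrow(2)$, note first that under (a) the sum of a $k_{\omega}$-space and a discrete space is again a $k_{\omega}$-space, hence $A(X)$ is a $k_{\omega}$-space by \cite[Theorem 7.4.1]{AT2008} and every $A_{n}(X)$ is closed in $A(X)$ and therefore a $k$-space; under (b) one invokes the classical theorem of Yamada \cite{Y1993}, which characterises the $k$-property of the levels $A_{n}(X)$ for metrizable $X$ in terms of compactness of $X^{\prime}$.

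The substance of the lemma lies in the implication $(1)\Rightarrow(3)$, and the plan is to run it as follows. Assume $A_{4}(X)$ is a $k$-space. The first step would be to observe that $X^{2}$ embeds as a closed subspace of $A_{4}(X)$ (for instance, via $(x,y)\mapsto x+y-a-b$ on $(X\setminus\{a,b\})^{2}$ for fixed distinct $a,b\in X$), so $X^{2}$ inherits the $k$-property. Since $X$ is $k^{*}$-metrizable, \cite[Theorem 6.4]{BBK2008} supplies a compact-countable $k$-network, and then \cite[Theorem 3.4]{LiuT1998} forces $X$ to be either first-countable or locally $k_{\omega}$.

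In the first-countable case, $k^{*}$-metrizability upgrades to metrizability, and I would invoke Yamada's classification \cite{Y1993}: the $k$-property of $A_{4}(X)$ forces $X$ to be either the topological sum of a $k_{\omega}$-space and a discrete space, or metrizable with $X^{\prime}$ compact, yielding (a) or (b). In the locally $k_{\omega}$ case, I would follow the template of Lemma~\ref{llll6}: a star-countable $k$-network together with \cite[Corollary 2.4]{Sakai1997} makes $X$ a paracompact $\sigma$-space and forces sequentiality; an uncountable closed discrete subset of $X^{\prime}$ is then ruled out by the $S_{\omega_{1}}$-quotient construction, which now produces a closed copy of $S_{\omega_{1}}\times S_{\omega_{1}}$ sitting inside $A_{4}(Y)$ and contradicts $k$-ness via \cite[Corollary 7.6.23]{AT2008}. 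Lindel\"{o}fness of $X^{\prime}$ combined with local $k_{\omega}$-ness then delivers (a).

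The main obstacle I expect is this last step: to carry the $S_{\omega_{1}}\times S_{\omega_{1}}$ contradiction from Lemma~\ref{llll6} into the weaker hypothesis that only $A_{4}(X)$ is a $k$-space, one must check that the relevant copy of $S_{\omega_{1}}\times S_{\omega_{1}}$ actually sits in the fourth level $A_{4}(Y)$ rather than merely in $A(Y)$; this is precisely why $n=4$ is singled out as the critical threshold. A secondary point is to verify that the map $(x,y)\mapsto x+y-a-b$ gives a genuinely closed embedding of $X^{2}$ into $A_{4}(X)$, which is done by the standard word-length and support analysis of elements of $A_{n}(X)$.
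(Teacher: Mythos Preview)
Your implication $(3)\Rightarrow(2)$ under (a) contains a slip: the topological sum of a $k_{\omega}$-space and an \emph{uncountable} discrete space is not $k_{\omega}$ (it is not even $\sigma$-compact), so you cannot conclude that $A(X)$ is a $k_{\omega}$-space. The paper handles this case by writing $A(X)\cong A(Y)\times A(D)$ for $X=Y\oplus D$ and invoking \cite[Theorem~7.4.1]{AT2008} to see that each $A_{n}(X)$ is a $k$-space; some argument of this kind is needed.

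For $(1)\Rightarrow(3)$ your route diverges from the paper's, and the obstacle you flag is real and not yet resolved. In Lemma~\ref{llll6} the quotient argument works because the hypothesis is that $A(X)$ itself is a $k$-space: the induced homomorphism $A(X)\to A(Y)$ is quotient, so $A(Y)$ is sequential, and the closed copy of $S_{\omega_{1}}\times S_{\omega_{1}}$ in $A(Y)$ gives the contradiction. With only $A_{4}(X)$ assumed to be a $k$-space you would need the restriction $A_{4}(X)\to A_{4}(Y)$ to be a quotient map, and there is no general reason for that; locating $S_{\omega_{1}}\times S_{\omega_{1}}$ inside $A_{4}(Y)$ is not the bottleneck. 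The paper avoids the quotient entirely. After embedding $X^{2}$ closedly in $A_{4}(X)$ (via \cite[Lemma~4.7]{LLL2016}), it shows that a non-metrizable $k^{*}$-metrizable $k$-space must contain a closed copy of $S_{\omega}$ or $S_{2}$ (otherwise \cite{YanLin1999,Lin1997} give a point-countable base and metrizability); hence $S_{\omega}\times X$ is a $k$-space, and \cite[Lemma~4]{Lin1998} then forces a compact-countable $k$-network of compact sets, from which $X$ decomposes as a topological sum of $k_{\omega}$-subspaces. The $\aleph_{1}$-compactness of $X'$ is read off from Yamada's \cite[Theorem~3.4]{Y1993}, not from a quotient construction. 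If you wish to salvage your own route, replace the quotient by a closed embedding: set $Z=\bigoplus_{\alpha<\omega_{1}}C_{\alpha}$ (a closed metrizable subspace of $X$), use Tkachenko's theorem \cite{Tkachenko} to identify $A(Z)$ with $A(Z,X)$, so that $A_{4}(Z)$ sits closedly in $A_{4}(X)$, and then invoke Yamada's characterisation \cite{Y1993} to see that $A_{4}(Z)$ cannot be a $k$-space since $Z'$ is not compact.
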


\begin{proof}
Obviously, we have (2) $\Rightarrow$ (1). It suffice to prove (1) $\Rightarrow$ (3) and (3) $\Rightarrow$ (2).

(3) $\Rightarrow$ (2). If $X$ is metrizable and $X^{\prime}$ is compact, then it follows from \cite[Theorem 4.2]{Y1993} that each $A_{n}(X)$ is a $k$-space.
Assume that $X$ is the topological sum of a $k_\omega$-space $Y$ and a discrete space $D$. Then $A(X)$ is topologically isomorphic to $A(Y)\times A(D)$. Then It follows from \cite[Theorem 7.41]{AT2008} that each $A_{n}(X)$ is a $k$-space.

(1) $\Rightarrow$ (3). By \cite[Theorem 4.2]{Y1993}, it suffices to prove $X$ is metrizable or the topological sum of a $k_\omega$-space and a discrete space. Assume that $X$ is not metrizable. Then we shall prove that $X$ is the topological sum of a $k_\omega$-space and a discrete space. First, we claim that $X$ contains a closed copy of $S_\omega$ or $S_2$. Suppose not, since $X$ is a $k$-space with a point-countable $k$-network, it follow from \cite[Lemma 8]{YanLin1999} and \cite[Corollary 3.10]{Lin1997} that $X$ has a point-countable base, and thus $X$ is metrizable since a paracompact $\sigma$-space with a point-countable base is metrizable \cite{Gr1984}, which is a contradiction with the assumption.
Moreover, by \cite[Lemma 4.7]{LLL2016}, $A_{4}(X)$ contains a closed copy of $X^{2}$, hence $X^{2}$ is a $k$-space. Next, we shall prove that $X$ is the topological sum of a family of $k_{\omega}$-spaces. We divide the proof into the following two cases.

 \smallskip
{\bf Case 1.1:} The space $X$ contains a closed copy of $S_\omega$.

 \smallskip
Since $S_\omega\times X$ is a closed subspace of $X^2$, the subspace $S_\omega\times X$ is a $k$-space. By \cite[Lemma 4]{Lin1998}, the space $X$ has a compact-countable $k$-network consisting of sets with
compact closures, hence $X$ has a compact-countable compact $k$-network  $\mathscr{P}$. Then $\mathscr{P}$ is star-countable, hence it follows from \cite{H1964} that we have $$\mathscr{P}=\bigcup_{\alpha\in A}\mathscr{P}_\alpha,$$ where each $\mathscr{P}_\alpha$ is countable and $(\bigcup\mathscr{P}_\alpha)\cap(\bigcup\mathscr{P}_\beta)=\varnothing$ for any $\alpha\neq\beta\in A$. For each $\alpha\in A$, put $X_\alpha=\bigcup\mathscr{P}_\alpha$. Obviously,  each $X_\alpha$ is a closed $k$-subspace of $X$ and has a countable compact $k$-network $\mathscr{P}_\alpha$. Moreover, we claim that each $X_\alpha$ is open in $X$. Indeed, fix an arbitrary $\alpha\in A$. Since $X$ is a $k$-space, it suffices to prove that $\bigcup\{X_{\beta}: \beta\in A, \beta\neq\alpha\}\cap K$ is closed in $K$ for each compact subset $K$ in $X$. Take an arbitrary compact subset $K$ in $X$. Since $\mathscr{P}$ is a $k$-network of $X$, there exists a finite subfamily $\mathscr{P}^{\prime}\subset\mathscr{P}$ such that $K\subset \bigcup\mathscr{P}^{\prime}$. Then
\begin{eqnarray}
\bigcup\{X_{\beta}: \beta\in A, \beta\neq\alpha\}\cap K&=&\bigcup\{X_{\beta}: \beta\in A, \beta\neq\alpha\}\cap K\cap\bigcup\mathscr{P}^{\prime}\nonumber\\
&=&K\cap\{P: P\in\mathscr{P}^{\prime}, P\not\in\mathscr{P}_{\alpha}\}.\nonumber
\end{eqnarray}
Since each element of $\mathscr{P}^{\prime}$ is compact, the set $\bigcup\{X_{\beta}: \beta\in A, \beta\neq\alpha\}\cap K$ is closed in $K$.
Therefore, $X=\bigoplus_{\alpha\in A}X_\alpha$ and each $X_\alpha$ is a $k_\omega$-subspace of $X$. Thus $X$ is the topological sum of a family of $k_\omega$-subspaces.

 \smallskip
{\bf Case 1.2:} The space $X$ contains a closed copy of $S_2$.

 \smallskip
Obviously, $S_2\times X$ is a $k$-space. Since $S_\omega$ is the image of $S_2$ under the perfect mapping and the $k_R$-property is preserved by the quotient mapping, $S_\omega\times X$ is a $k$-space. By Case 1.1, $X$ is the topological sum of a family of $k_\omega$-subspaces.

 \smallskip
Therefore, $X$ is the topological sum of a family of $k_\omega$-subspaces. Finally, it suffices to prove that $X^{\prime}$ is $\aleph_{1}$-compact. Indeed, this fact is easily checked by using the well-known Theorem of Yamada \cite[Theorem 3.4]{Y1993}. Then $X$ is the topological sum of a $k_\omega$-space and a discrete space.
\end{proof}

By Lemmas~\ref{llll6} and~\ref{La}, we have the following theorem.

\begin{theorem}\label{t5}
Let $X$ be a non-metrizable $k^*$-metrizable space. Then the following statements are equivalent:
 \begin{enumerate}
\item $A(X)$ is a $k$-space;
\item $A_{4}(X)$ is a $k$-space;
\item $X$ is the topological sum of a $k_\omega$-space and a discrete space.
  \end{enumerate}
\end{theorem}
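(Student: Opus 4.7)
The plan is to close a short cycle of implications $(1)\Rightarrow(2)\Rightarrow(3)\Rightarrow(1)$, using the two preceding lemmas and one elementary observation; the non-metrizability hypothesis is used exactly once, to eliminate a stray alternative coming out of Lemma~\ref{La}.

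For $(1)\Rightarrow(2)$, I would invoke the standard fact that $A_n(X)$ is closed in $A(X)$ for every $n\in\mathbb{N}$ (so $A_4(X)$ in particular is a closed subspace), and that closed subspaces of $k$-spaces are $k$-spaces. Hence if $A(X)$ has the $k$-property, so does $A_4(X)$.

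For $(2)\Rightarrow(3)$, the work is done by Lemma~\ref{La}: assuming $A_4(X)$ is a $k$-space on a $k^*$-metrizable $X$, we get that $X$ either (a) is the topological sum of a $k_\omega$-space and a discrete space, or (b) is metrizable with compact $X'$. The hypothesis that $X$ is \emph{non-metrizable} rules out (b), so (a) must hold, which is exactly statement (3). This is the only place where non-metrizability is used.

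For $(3)\Rightarrow(1)$, Lemma~\ref{llll6} provides the implication directly: in the class of $k^*$-metrizable spaces, being a topological sum of a $k_\omega$-space and a discrete space is precisely the sufficient (and necessary) condition for $A(X)$ to be a $k$-space. Thus the three conditions are equivalent. There is no real obstacle to this proof: the theorem is a clean corollary of Lemmas~\ref{llll6} and~\ref{La} once one notes that $A_4(X)$ is closed in $A(X)$ and that the metrizable alternative in Lemma~\ref{La} is forbidden by hypothesis.
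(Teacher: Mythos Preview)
Your proposal is correct and matches the paper's intent: the paper states the theorem as an immediate consequence of Lemmas~\ref{llll6} and~\ref{La} without writing out a proof, and your cycle $(1)\Rightarrow(2)\Rightarrow(3)\Rightarrow(1)$ is a faithful unpacking of that. The only cosmetic difference is that the paper implicitly gets $(1)\Leftrightarrow(3)$ directly from Lemma~\ref{llll6} and $(2)\Leftrightarrow(3)$ from Lemma~\ref{La} (with non-metrizability killing alternative~(b)), whereas you route $(1)\Rightarrow(2)$ through the elementary closedness of $A_4(X)$ in $A(X)$; both organizations are equivalent and equally valid.
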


Now, we can prove some of our main results in this paper.

\begin{theorem}\label{t3}
Let $X$ be a $k^*$-metrizable space. Then the following statements are equivalent:
 \begin{enumerate}
\item $V(X)$ is a $k$-space;
\item $V(X)$ is a $k_{\omega}$-space;
\item $X$ is a $k_{\omega}$-space.
  \end{enumerate}
\end{theorem}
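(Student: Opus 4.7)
The direction $(3) \Rightarrow (2)$ is \cite[Theorem 2.5]{GM2017}, and $(2) \Rightarrow (1)$ is immediate; all the work is in $(1) \Rightarrow (3)$, which I would carry out in three stages.

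\emph{Stage 1: pass to $A(X)$.} Since $V(X)$ is in particular an abelian topological group, the universal property of $A(X)$ produces a continuous group homomorphism $\iota : A(X) \to V(X)$, and by the identification results in \cite{GM2017} this $\iota$ is a closed topological embedding. Hence $A(X)$, and in particular $A_{4}(X)$, inherits the $k$-property from $V(X)$. Lemma~\ref{La} then applies and gives the dichotomy that either (a) $X = X_{0} \oplus D$ with $X_{0}$ a $k_{\omega}$-space and $D$ discrete, or (b) $X$ is metrizable and $X'$ is compact.

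\emph{Stage 2: produce a large closed discrete subspace unless $X$ is $k_{\omega}$.} In case (a), if $D$ is countable then $X$ is $k_{\omega}$ and we are done; otherwise $D$ itself is a closed discrete subspace of cardinality $\geq \omega_{1}$. In case (b), if $|X \setminus X'| \leq \omega$ then $X$ is separable, locally compact, metric and therefore $\sigma$-compact, hence $k_{\omega}$; if instead $|X \setminus X'| \geq \omega_{1}$, then for a fixed compatible metric $d$ the stratification $D_{n} = \{x \in X \setminus X' : d(x, X') > 1/n\}$ expresses $X \setminus X'$ as a countable union of subspaces, each of which is closed in $X$ (the $1/n$-separation from $X'$ forces $\overline{D_{n}} \subset X \setminus X'$, and isolated points of $X$ are clopen there) and discrete, so one of the $D_{n}$ must be uncountable and serves as our closed discrete subspace of cardinality $\geq \omega_{1}$.

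\emph{Stage 3: contradict the $k$-property via the witness of Lemma~\ref{t1}.} Let $D \subset X$ be a closed discrete subspace with $|D| \geq \omega_{1}$. By \cite[Theorem 4.1]{GM2017}, $V(D, X) \cong V(D) \cong (\mathbb{V}_{\omega_{1}}, \mu_{\omega_{1}})$ is a closed subspace of $V(X)$, and so its closed subspace $\mbox{sp}_{2}(D) \cong (\mathbb{V}_{\omega_{1}, 2}, \mu_{\omega_{1}}|_{\mathbb{V}_{\omega_{1}, 2}})$ must itself be a $k$-space. I would then reuse the set $Y = \{c_{\alpha, \beta, n} : \alpha, \beta < \omega_{1},\ n \in A_{\alpha} \cap B_{\beta}\}$ built in the proof of Lemma~\ref{t1}, for which $0 \in \overline{Y} \setminus Y$, so $Y$ is not closed. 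By \cite[Proposition 5.5]{GM2017}, every compact $K \subset \mbox{sp}_{2}(D)$ is contained in some $\mbox{sp}_{2}(F)$ with $F \subset D$ finite, and the almost disjointness condition (a) of Lemma~\ref{t1} forces $A_{\alpha} \cap B_{\beta}$ to be finite for all $\alpha, \beta$; combined, these two facts make $Y \cap \mbox{sp}_{2}(F)$, and hence $Y \cap K$, a finite set. Thus $Y$ is compactly closed yet not closed in $\mbox{sp}_{2}(D)$, contradicting the $k$-property. The hard part of the argument is precisely this last compactness analysis, since it rests on confirming that compact subsets of $V(D)$ really do stay inside finite-dimensional slices where the almost-disjoint witness $Y$ can meet them in only finitely many points.
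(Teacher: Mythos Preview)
Your Stage~2, case~(b), contains a genuine gap. From ``$X$ metrizable, $X'$ compact, and $|X\setminus X'|\le\omega$'' you conclude that $X$ is locally compact (and hence $k_\omega$), but this is false. Take
\[
X=\{0\}\cup\Bigl\{\tfrac{1}{k}\,e_n : n,k\in\mathbb{N}\Bigr\}\subset\ell^{2},
\]
where $(e_n)$ is the standard orthonormal basis. Each $\tfrac{1}{k}e_n$ is isolated, $X'=\{0\}$ is compact, and $X\setminus X'$ is countable; yet every neighborhood of $0$ contains, for some fixed $k$, the infinite closed discrete set $\{\tfrac{1}{k}e_n:n\in\mathbb{N}\}$, so $0$ has no compact neighborhood and $X$ is not $k_\omega$. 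Thus your dichotomy does not close.

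The fix is already in your hands: in Stage~1 you observe that \emph{all of} $A(X)$, not merely $A_4(X)$, is a closed subspace of $V(X)$ and hence a $k$-space. Invoking Lemma~\ref{llll6} instead of Lemma~\ref{La} then gives $X=X_0\oplus D$ outright, eliminating case~(b) entirely. This is exactly what the paper does; it then uses the decomposition $V(X)\cong V(X_0)\oplus V(D)$ (\cite[Corollary~2.6]{GM2017}) together with Corollary~\ref{c0} and Lemma~\ref{t2} to force $D$ to be countable, rather than your explicit $k$-closed-but-not-closed witness. Your Stage~3 route is a legitimate alternative once case~(b) is removed, with one caveat: the assertion $V(D,X)\cong V(D)$ is not what \cite[Theorem~4.1]{GM2017} provides (indeed the paper poses this as an open question), but you do not actually need it. Since $\mu_{\omega_1}$ is the \emph{maximal} vector topology, the subspace topology on $V(D,X)$ is automatically coarser, so $0\in\overline{Y}$ there as well; and your finiteness argument for $Y\cap K$ via \cite[Proposition~5.5]{GM2017} only uses that $D$ is closed in a Dieudonn\'e-complete $X$, which holds once $X=X_0\oplus D$.
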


\begin{proof}
By \cite[Theorem 3.1]{GM2017}, we have (3) $\Rightarrow$ (2), and (2) $\Rightarrow$ (1) is obvious. It suffices to prove (1) $\Rightarrow$ (3).

(1) $\Rightarrow$ (3). By Lemma~\ref{llll6}, $X$ is the topological sum of a $k_{\omega}$-space and a discrete space. Let $X=X_{0}\bigoplus D$, where $X_{0}$ is a $k_{\omega}$-space and $D$ is a discrete space. By \cite[Corollary 2.6]{GM2017}, we have $$V(X)\cong V(X_{0})\bigoplus V(D).$$ Hence $V(D)$ is a $k$-space. Then $V(D)$ is sequential by Corollary~\ref{c0}, which implies $D$ is countable by Lemma~\ref{t2}. Therefore, $X$ is a $k_{\omega}$-space.
\end{proof}

By Corollary~\ref{c0}, Theorems~\ref{t2} and~\ref{t3}, we have the following corollary.

\begin{corollary}
For an arbitrary uncountable discrete space $D$, the space $\mbox{sp}_{2}(D)$ and $V(D)$ are all not $k$-spaces.
\end{corollary}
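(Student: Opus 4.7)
The plan is to dispatch the two claims separately, in each case deriving a contradiction from the assumption of the $k$-property by combining submetrizability (which gives a sequentiality criterion) with the tightness obstruction supplied by Lemma~\ref{t2}, together with the main structural characterization of Theorem~\ref{t3}. First I would record the preliminary observation that, since $D$ is discrete and hence metrizable (in particular submetrizable), the theorem on submetrizability of $V(X)$ proved above implies that $V(D)$ is submetrizable, and therefore so is every subspace of $V(D)$; in particular $\mbox{sp}_{2}(D)$ is submetrizable.

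For $V(D)$ itself, I would invoke Theorem~\ref{t3}. The discrete space $D$ is metrizable, hence $k^{*}$-metrizable, so Theorem~\ref{t3} applies: if $V(D)$ were a $k$-space, then $D$ would have to be a $k_{\omega}$-space. But an uncountable discrete space is not $\sigma$-compact (its only compact subsets are finite), so it cannot be a $k_{\omega}$-space. This contradiction shows $V(D)$ is not a $k$-space. Alternatively, and perhaps cleaner because it parallels the argument for $\mbox{sp}_{2}(D)$, one may combine Corollary~\ref{c0} and Lemma~\ref{t2}: if $V(D)$ were a $k$-space, Corollary~\ref{c0} would force it to be sequential and hence of countable tightness, contradicting the uncountable tightness delivered by Lemma~\ref{t2}.

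For $\mbox{sp}_{2}(D)$, suppose for contradiction that it is a $k$-space. Since $\mbox{sp}_{2}(D)$ is submetrizable, the same reasoning underlying Corollary~\ref{c0} applies verbatim to this subspace: a submetrizable $k$-space is sequential, because compact subsets of a submetrizable space are metrizable (a continuous bijection from a compact Hausdorff space to a Hausdorff space is a homeomorphism), hence first-countable, and a $k$-space all of whose compact subsets are sequential is itself sequential. Thus $\mbox{sp}_{2}(D)$ would be sequential, therefore of countable tightness, which contradicts Lemma~\ref{t2}.

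There is no substantial obstacle here, since the two nontrivial inputs — the uncountable-tightness construction of Lemma~\ref{t2} and the structural characterization of Theorem~\ref{t3} — have already been established. The only small point worth spelling out explicitly in the write-up is that the implication ``submetrizable $k$-space $\Rightarrow$ sequential'' used in Corollary~\ref{c0} does not depend on the ambient space being of the form $V(X)$, so it may legitimately be applied to the closed subspace $\mbox{sp}_{2}(D)$.
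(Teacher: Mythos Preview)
Your proposal is correct and follows essentially the same route the paper indicates: the paper simply cites Corollary~\ref{c0}, Lemma~\ref{t2}, and Theorem~\ref{t3} without further elaboration, and your argument is precisely the unpacking of how those three results combine. Your explicit remark that the implication ``submetrizable $k$-space $\Rightarrow$ sequential'' behind Corollary~\ref{c0} applies to any submetrizable space, and hence to the subspace $\mbox{sp}_{2}(D)$, is a point the paper leaves tacit but which is indeed needed for the $\mbox{sp}_{2}(D)$ half.
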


{\bf Remark:} In \cite[Theorem 5]{P2012}, the authors said that $(\mathbb{V}_{\kappa}, \mu_{\kappa})$ is not sequential. However, the proof is wrong. Theorem~\ref{t3} shows that $(\mathbb{V}_{\kappa}, \mu_{\kappa})$ is not sequential.

The following theorem gives a characterization of a non-metrizable $k^*$-metrizable $X$ such that if $\mbox{sp}_{4}(X)$ is a $k$-space then $V(X)$ is also a $k$-space.

\begin{theorem}\label{t7}
Let $X$ be a non-metrizable $k^*$-metrizable space. The the following statements are equivalent:
 \begin{enumerate}
\item $V(X)$ is a $k$-space;
\item $V(X)$ is a $k_{\omega}$-space;
\item $\mbox{sp}_{4}(X)$ is a $k_{\omega}$-space;
\item $\mbox{sp}_{4}(X)$ is a $k$-space;
\item $X$ is a $k_{\omega}$-space.
  \end{enumerate}
\end{theorem}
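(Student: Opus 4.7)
The strategy is the cycle $(5)\Rightarrow(2)\Rightarrow(3)\Rightarrow(4)\Rightarrow(5)$ supplemented by $(1)\Leftrightarrow(2)$. The equivalences $(1)\Leftrightarrow(2)\Leftrightarrow(5)$ are already contained in Theorem~\ref{t3}; the implication $(2)\Rightarrow(3)$ is immediate because each $\mbox{sp}_n(X)$ is closed in $V(X)$ by \cite[Theorem 2.3]{GM2017}, and a closed subspace of a $k_\omega$-space is a $k_\omega$-space; and $(3)\Rightarrow(4)$ is trivial. Thus the only substantive step is $(4)\Rightarrow(5)$, which I sketch next.

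Assume $\mbox{sp}_4(X)$ is a $k$-space. First, $A_4(X)$ sits inside $\mbox{sp}_4(X)$ as a \emph{closed} subset: $A(X)$ is closed in $V(X)$ (it is the intersection over $x\in X$ of the preimages of $\mathbb{Z}$ under the continuous coordinate functionals of $V(X)$, which exist because $V(X)$ maps continuously and linearly onto $L(X)$), and $A_4(X)=A(X)\cap\mbox{sp}_4(X)$. Hence $A_4(X)$ inherits the $k$-property. Since $X$ is a non-metrizable $k^*$-metrizable space, Theorem~\ref{t5} then forces $X$ to split as a topological sum $X=X_0\oplus D$ with $X_0$ a $k_\omega$-space and $D$ a discrete space, and the remaining task is to show that $D$ is countable.

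Suppose toward contradiction that $D$ is uncountable. By \cite[Corollary 2.6]{GM2017}, $V(X)\cong V(X_0)\oplus V(D)$, so $V(D)$ is a closed linear subspace of $V(X)$, and a short uniqueness-of-representation argument yields $\mbox{sp}_4(D)=\mbox{sp}_4(X)\cap V(D)$: if $v\in\mbox{sp}_4(X)\cap V(D)$ has a length-$4$ representation $\sum_{i=1}^{4}\lambda_i x_i$ in $X$ with $\lambda_i\in[-4,4]$, then the $V(X_0)$-part $\sum_{i:\,x_i\in X_0}\lambda_i x_i$ vanishes by the direct-sum decomposition, and zero-padding the surviving $D$-terms exhibits $v$ as an element of $\mbox{sp}_4(D)$. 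Consequently $\mbox{sp}_4(D)$ is closed in $\mbox{sp}_4(X)$ and therefore is itself a $k$-space. Since $D$ is (trivially) submetrizable, the submetrizability theorem preceding Corollary~\ref{c0} makes $V(D)$, and hence $\mbox{sp}_4(D)$, submetrizable; a submetrizable $k$-space is sequential (on each compactum sequentially closed equals closed, since compact submetrizable subsets are metrizable), so $\mbox{sp}_4(D)$ has countable tightness. Tightness being hereditary and $\mbox{sp}_2(D)\subseteq\mbox{sp}_4(D)$, we obtain $t(\mbox{sp}_2(D))\leq\omega$, contradicting Lemma~\ref{t2}. Hence $D$ is countable, and $X=X_0\oplus D$ is a $k_\omega$-space.

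The main obstacle is the closure bookkeeping: verifying that both $A_4(X)$ and $\mbox{sp}_4(D)$ embed in $\mbox{sp}_4(X)$ as closed subsets, which is what allows the $k$-property to be propagated down to the subspaces where Theorem~\ref{t5} and Lemma~\ref{t2} do the decisive work.
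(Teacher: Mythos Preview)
Your proof follows essentially the same route as the paper's: the only nontrivial implication is $(4)\Rightarrow(5)$, handled by first passing to $A_4(X)$ inside $\mbox{sp}_4(X)$ to invoke Theorem~\ref{t5}, and then ruling out an uncountable discrete summand via Lemma~\ref{t2}. Your treatment of the second step is in fact more explicit than the paper's, which simply writes ``$D$ is countable by Lemma~\ref{t2}'' and leaves the reader to supply the passage from $\mbox{sp}_4(X)$ down to $\mbox{sp}_2(D)$; your argument via $\mbox{sp}_4(D)=\mbox{sp}_4(X)\cap V(D)$, submetrizability, and sequentiality fills that in correctly.

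One small correction: the equality $A_4(X)=A(X)\cap\mbox{sp}_4(X)$ is false. For instance $5x=4\cdot x+1\cdot x+0+0\in A(X)\cap\mbox{sp}_4(X)$, yet $5x\notin A_4(X)$. What you actually need is only the inclusion $A_4(X)\subset\mbox{sp}_4(X)$ (clear, since a reduced word of length $\le 4$ has at most four summands with integer coefficients of absolute value $\le 4$) together with the fact that $A_4(X)$ is closed in $V(X)$; the latter holds because $A_4(X)$ is closed in $A(X)$ and, by \cite[Proposition~5.1]{GM2017}, $A(X)$ sits in $V(X)$ as a closed subgroup with the inclusion a topological embedding. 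This embedding clause, which the paper cites directly and which your coordinate-functional argument does not supply, is also what guarantees that the $k$-property you obtain on the image of $A_4(X)$ inside $V(X)$ is really the $k$-property of $A_4(X)$ with its intrinsic free-abelian-group topology, as required before invoking Theorem~\ref{t5}.
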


\begin{proof}
Obviously, it suffices to prove that $X$ is a $k_{\omega}$-space if $\mbox{sp}_{4}(X)$ is a $k$-space. Since $A(X)$ is closed in $V(X)$ and an embedding by \cite[Proposition 5.1]{GM2017}, $A_{4}(X)$ is closed subset of $V(X)$, hence $A_{4}(X)$ is a $k$-space. By Theorem~\ref{t5}, $X$ is the topological sum of a $k_{\omega}$-space and a discrete space. Then $D$ is countable by Lemma~\ref{t2}. Therefore, $X$ is a $k_{\omega}$-space.
\end{proof}

Finally we shall prove the last main results of this paper, and prove that for a paracompact $\sigma$-and $k$-space $X$, the tightness of $V(X)$ is countable if and only if $X$ is separable, which gives an affirmative answer to Question~\ref{q2}. First of all, we prove a lemma.

\begin{lemma}\label{llll5}
Let $X$ be a paracompact, sequential space. If the tightness of $A_{4}(X)$ is countable, then the set of all non-isolated points $X$ is separable.
\end{lemma}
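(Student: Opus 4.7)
The plan is to prove the contrapositive: assume $X'$ is not separable and construct a subset $E\subseteq A_{4}(X)$ with $0\in\overline{E}$ but $0\notin\overline{B}$ for every countable $B\subseteq E$, which contradicts $t(A_{4}(X))\leq\omega$. The construction transplants the one from Lemma~\ref{t1} into the Graev uniformity on $A(X)$.

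The first step is to extract from $X'$ an uncountable set $\{x_{\alpha}:\alpha<\omega_{1}\}$ together with a discrete family $\{U_{\alpha}\}$ of open subsets of $X$ such that $x_{\alpha}\in U_{\alpha}$, and for each $\alpha$ a sequence $\{y_{\alpha,n}\}_{n\in\mathbb{N}}$ in $U_{\alpha}\setminus\{x_{\alpha}\}$ converging to $x_{\alpha}$. The convergent sequences are furnished by the sequentiality of $X$ together with the non-isolation of each $x_{\alpha}\in X'$. Since the isolated points of $X$ form an open subset, $X'$ is closed in $X$ and hence itself paracompact; the non-separability of $X'$ is then used to extract an uncountable closed discrete subset of $X'$, and paracompactness of $X$ supplies the desired discrete open expansion $\{U_{\alpha}\}$. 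This extraction is the step I expect to be the main technical obstacle.

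With these data fixed, take the almost-disjoint families $\mathscr{A}=\{A_{\alpha}\}_{\alpha<\omega_{1}}$ and $\mathscr{B}=\{B_{\alpha}\}_{\alpha<\omega_{1}}$ of infinite subsets of $\omega$ from the proof of Lemma~\ref{t1}, and set
\[
E=\{y_{\alpha,n}+y_{\beta,n}-x_{\alpha}-x_{\beta}:\alpha,\beta<\omega_{1},\ n\in A_{\alpha}\cap B_{\beta}\}\subseteq A_{4}(X).
\]
A basic $0$-neighborhood in $A(X)$ has the form $\{g\in A(X):\overline{\rho}(g)<\varepsilon\}$, where $\rho$ is a continuous pseudometric on $X$, $\varepsilon>0$, and $\overline{\rho}$ is the Graev extension of $\rho$ to the free abelian topological group $A(X)$. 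Since $y_{\alpha,n}\to x_{\alpha}$, choose $N(\alpha)\in\mathbb{N}$ with $\rho(y_{\alpha,n},x_{\alpha})<\varepsilon/2$ for $n\geq N(\alpha)$. The combinatorial argument of Lemma~\ref{t1}, applied to the set $A=\bigcup_{\alpha}\bigl(A_{\alpha}\cap[N(\alpha),\infty)\bigr)$ via property~(b) of the almost-disjoint families, produces indices $\alpha,\beta<\omega_{1}$ and $n\in A_{\alpha}\cap B_{\beta}$ with $n\geq\max(N(\alpha),N(\beta))$, whence $\overline{\rho}(y_{\alpha,n}+y_{\beta,n}-x_{\alpha}-x_{\beta})<\varepsilon$, so $0\in\overline{E}$.

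For the second claim, take a countable $B\subseteq E$, enumerate the indices appearing in $B$ as $\{\alpha_{m}:m\in\omega\}$, and set $\psi(m)=1+\max\{A_{\alpha_{k}}\cap B_{\alpha_{l}}:k,l\leq m\}$ (finite by almost disjointness). Paracompact Hausdorff spaces being normal, Urysohn's lemma inside each $U_{\alpha_{m}}$ yields a continuous function $g_{m}\colon X\to[0,1]$ supported in $U_{\alpha_{m}}$ with $g_{m}(x_{\alpha_{m}})=0$ and $g_{m}\equiv 1$ on the finite set $\{y_{\alpha_{m},n}:n\leq\psi(m)\}$. Discreteness of $\{U_{\alpha}\}$ makes $g=\sum_{m}g_{m}$ continuous on $X$, and $\rho(u,v)=|g(u)-g(v)|$ is a continuous pseudometric. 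For any $c=y_{\alpha_{k},n}+y_{\alpha_{l},n}-x_{\alpha_{k}}-x_{\alpha_{l}}\in B$, the membership $n\in A_{\alpha_{k}}\cap B_{\alpha_{l}}$ forces $n\leq\psi(\max(k,l))$, so $g(y_{\alpha_{\max(k,l)},n})=1$. Since $g$ vanishes on every $x_{\alpha_{m}}$, both decompositions of $c$ into pairs of elementary differences cost $g(y_{\alpha_{k},n})+g(y_{\alpha_{l},n})\geq 1$, giving $\overline{\rho}(c)\geq 1$. Hence $B$ is disjoint from the $0$-neighborhood $\{g:\overline{\rho}(g)<1/2\}$, so $0\notin\overline{B}$, contradicting $t(A_{4}(X))\leq\omega$.
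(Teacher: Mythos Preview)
Your argument is correct and takes a genuinely different route from the paper's proof.

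Both proofs begin identically: assume $X'$ is non-separable, pass to an uncountable closed discrete set $\{x_{\alpha}:\alpha<\omega_{1}\}\subseteq X'$, expand to a discrete open family $\{U_{\alpha}\}$, and pick nontrivial convergent sequences $y_{\alpha,n}\to x_{\alpha}$ inside each $U_{\alpha}$. From that point the paper simply forms the closed subspace $Y=\bigoplus_{\alpha<\omega_{1}}C_{\alpha}$ with $C_{\alpha}=\{x_{\alpha}\}\cup\{y_{\alpha,n}:n\in\mathbb{N}\}$, quotes Tkachenko's theorem to embed $A(Y)$ (hence $A_{4}(Y)$) into $A(X)$, and then invokes Yamada's result \cite[Theorem 4.2]{Y1997} that $t(A_{4}(Y))>\omega$ as a black box. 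You instead transplant the Hausdorff-gap construction of Lemma~\ref{t1} directly into $A_{4}(X)$ via the set $E=\{y_{\alpha,n}+y_{\beta,n}-x_{\alpha}-x_{\beta}:n\in A_{\alpha}\cap B_{\beta}\}$, and verify both closure properties by hand using Graev pseudometrics. In effect you are reproving the relevant case of Yamada's theorem, which makes the argument self-contained and exhibits the common combinatorial core behind Lemma~\ref{t1} and the present lemma; the paper's version is shorter but outsources the real work to \cite{Y1997} and \cite{Tkachenko}.

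One small point: in your verification that $0\notin\overline{B}$, the sentence ``both decompositions of $c$ into pairs of elementary differences cost $\ldots$'' only checks the two obvious pairings, whereas the Graev seminorm is an infimum over \emph{all} representations. The conclusion is still correct, and the cleanest way to see it is to bypass $\overline{\rho}$ entirely: the continuous map $g\colon X\to\mathbb{R}$ extends to a continuous homomorphism $\hat{g}\colon A(X)\to\mathbb{R}$, and for every $c\in B$ one has $\hat{g}(c)=g(y_{\alpha_{k},n})+g(y_{\alpha_{l},n})\geq 1$, so $B$ misses the open $0$-neighbourhood $\hat{g}^{-1}((-1,1))$. (Equivalently, $|\hat{g}(c)|\leq\overline{\rho}(c)$ for any representation, which justifies your inequality.)
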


\begin{proof}
Assume that the set $X^{\prime}$ of all non-isolated points $X$ is not separable. Then since $X^{\prime}$ is closed in a paracompact space $X$, there exists an uncountable closed discrete subset $\{x_{\alpha}: \alpha<\omega_{1}\}$ of $X^{\prime}$. Since $X$ is paracompact, there exists a family of discrete open subsets $\{U_{\alpha}: \alpha<\omega_{1}\}$ such that $x_{\alpha}\in U_{\alpha}$ for each $\alpha<\omega_{1}$. Since $X$ is a sequential space, for each $\alpha<\omega_{1}$ we can take a nontrivial convergent sequence $\{x_{\alpha, n}: n\in\mathbb{N}\}\subset U_{\alpha}$ with the limit point $x_{\alpha}$. For each $\alpha<\omega_{1}$, put $C_{\alpha}=\{x_{\alpha}\}\cup\{x_{\alpha, n}: n\in\mathbb{N}\}$. Put $Y=\bigoplus_{\alpha<\omega_{1}}C_{\alpha}$. It follows from \cite[Theorem 4.2]{Y1997} that $A_{4}(Y)$ is not of countable tightness. However, since $X$ is paracompact and $Y$ is closed in $X$, it follows from \cite{Tkachenko} that $A(Y)$ is topologically isomorphic to $A(Y, X)$. Then $A_{4}(Y)$ is of countable tightness, which is a contradiction. Therefore, the set of all non-isolated points of $X$ is separable.
\end{proof}

\begin{theorem}\label{t4}
Let $X$ be a paracompact $\sigma$-space. If $X$ is a $k$-space, then the following statements are equivalent:
 \begin{enumerate}
\item the tightness of $V(X)$ countable;
\item the tightness of $\mbox{sp}_{4}(X)$ is countable;
\item $X$ is separable.
  \end{enumerate}
\end{theorem}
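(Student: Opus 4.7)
The plan is to handle (1)$\Rightarrow$(2) by inheritance of tightness, (3)$\Rightarrow$(1) by propagating a countable network through the presentation $V(X)=\bigcup_{n}\mbox{sp}_{n}(X)$, and to concentrate the real work on (2)$\Rightarrow$(3). For (3)$\Rightarrow$(1), a separable paracompact space is Lindel\"of (every closed discrete subset can be expanded by collectionwise normality to a disjoint family of nonempty open sets, of which separability permits only countably many), and a Lindel\"of $\sigma$-space is cosmic; hence $X$ is cosmic, each $[-n,n]^{n}\times X^{n}$ is cosmic, its continuous image $\mbox{sp}_{n}(X)$ is cosmic, and a countable union of cosmic spaces has a countable network, giving $t(V(X))\leq\omega$.

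For (2)$\Rightarrow$(3), assume $t(\mbox{sp}_{4}(X))\leq\omega$. Compact subspaces of the regular $\sigma$-space $X$ inherit a countable network (any locally finite family meets only finitely many elements in a compact set) and so are metrizable, and combined with the $k$-space hypothesis this makes $X$ sequential. By \cite[Proposition 5.1]{GM2017}, $A(X)$ is embedded as a closed subspace of $V(X)$, so $A_{4}(X)\subseteq\mbox{sp}_{4}(X)$ inherits countable tightness, and Lemma~\ref{llll5} then forces $X^{\prime}$ to be separable. Suppose, toward a contradiction, that $X$ itself is not separable. Being paracompact but not Lindel\"of, $X$ has an uncountable closed discrete subset $E$; since $X^{\prime}$ is Lindel\"of, $E\cap X^{\prime}$ is countable, so $D=E\setminus X^{\prime}$ is an uncountable closed discrete subset of $X$ consisting entirely of isolated points of $X$.

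The closing step is to verify that the natural map $V(D)\to V(X)$ is a topological embedding with image $V(D,X)$. Define $f:X\to V(D)$ by $f(d)=d$ for $d\in D$ and $f(x)=0$ for $x\notin D$; because every $d\in D$ is isolated in $X$ and $D$ is closed in $X$, the map $f$ is continuous, and the universal property of $V$ extends it to a continuous linear retraction $r:V(X)\to V(D)$ whose restriction to $V(D,X)$ is the identity. Hence the inclusion $V(D)\hookrightarrow V(D,X)$ is a continuous bijection with continuous inverse $r|_{V(D,X)}$, giving $V(D)\cong V(D,X)$ topologically and in particular $\mbox{sp}_{2}(D,X)\cong\mbox{sp}_{2}(D)$. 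By Lemma~\ref{t2}, $\mbox{sp}_{2}(D)$ has uncountable tightness, and since $\mbox{sp}_{2}(D,X)\subseteq\mbox{sp}_{4}(X)$ this contradicts the standing hypothesis. I expect the main technical obstacle to be the continuity of $f$ at points of $X\setminus D$, which genuinely uses that $D$ is closed in $X$, so that $X\setminus D$ is an open neighbourhood of each such point on which $f$ vanishes; verifying this step carefully, together with the harmless continuity at the (isolated) points of $D$, is the crucial calculation.
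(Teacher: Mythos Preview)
Your proof is correct and follows essentially the same strategy as the paper: both routes deduce $X'$ separable from Lemma~\ref{llll5} via the embedding $A_4(X)\subseteq\mbox{sp}_4(X)$, then obtain an uncountable closed discrete set $D$ of isolated points if $X$ is not separable, and invoke Lemma~\ref{t2} on $\mbox{sp}_2(D)$ for the contradiction. Your argument is in fact cleaner at the critical point: the paper asserts the applicability of Lemma~\ref{t2} without justifying that $\mbox{sp}_2(D)$ sits inside $\mbox{sp}_4(X)$ with its intrinsic topology, whereas your retraction $r:V(X)\to V(D)$ built from the continuous map $f$ (using that $D$ is closed and consists of isolated points) makes the embedding $V(D)\cong V(D,X)$ explicit; likewise your (3)$\Rightarrow$(1) reproves by hand the content of \cite[Corollary~5.20]{GM2017} that the paper simply cites.
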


\begin{proof}
Clearly, we have (1) $\Rightarrow$ (2). It suffice to prove (3) $\Rightarrow$ (1) and (2) $\Rightarrow$ (3).

(3) $\Rightarrow$ (1). Assume that $X$ is separable, then $X$ is a cosmic space. By Corollary~\cite[Corollary 5.20]{GM2017}, $V(X)$ is a cosmic space, hence the tightness of $V(X)$ is countable.

(2) $\Rightarrow$ (3). Assume that tightness of $\mbox{sp}_{4}(X)$ is countable. Since $X$ is a $\sigma$-space, the $k$-property in $X$ is equivalent to the sequentiality of $X$. Since $A(X)$ is closed in $V(X)$ and an embedding by \cite[Proposition 5.1]{GM2017}, the tightness of $A_{4}(X)$ is countable. It follows from Lemma~\ref{llll5} that the set of all non-isolated points $X^{\prime}$ is separable. Since $X^{\prime}$ is a paracompact $\sigma$-space, $X^{\prime}$ is cosmic space. Then $X^{\prime}$ is $\aleph_{1}$-compact. By Lemma~\ref{t2}, it is easy to see that for each neighborhood $U$ of $X^{\prime}$ in $X$ we have $X\setminus U$ is countable, which also implies that $U$ is separable. Suppose not, there exists an open subset $U$ of $X^{\prime}$ such that $U$ is not separable. Then $W$ is not $\aleph_{1}$-compact since $U$ is also a $\sigma$-space. It easily check that $W$ contains an uncountable closed discrete subset $D$ of $X\setminus X^{\prime}$, then $U\setminus D$ is an open neighborhood of $X^{\prime}$. However, $U\setminus D$ is uncountable, which is a contradiction. Thus $X$ is separable.
\end{proof}

By Theorems~\ref{csf} and~\ref{t4}, we have the following corollary.

\begin{corollary}
Let $X$ be a paracompact $\aleph$-space. If $X$ is a $k$-space, then the following statements are equivalent:
 \begin{enumerate}
\item the tightness of $V(X)$ is countable;
\item the tightness of $\mbox{sp}_{_{4}}(X)$ is countable;
\item $\mbox{sp}_{1}(X)$ is $csf$-countable;
\item $V(X)$ is an $\aleph_{0}$-space;
\item $X$ separable.
  \end{enumerate}
\end{corollary}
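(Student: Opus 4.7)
The plan is to deduce this corollary as a direct consequence of Theorems~\ref{csf} and~\ref{t4}, using condition~(5) (separability of $X$) as the common pivot that links the two equivalence chains. Since all five conditions will be shown equivalent to separability, the corollary follows immediately.

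First I would verify that both prior theorems apply in the present setting. A paracompact space is collectionwise normal, so $X$ is a collectionwise normal $\aleph$-space and Theorem~\ref{csf} yields the chain
\[
\mbox{sp}_{1}(X) \text{ is } csf\text{-countable} \;\Longleftrightarrow\; V(X) \text{ is an } \aleph_{0}\text{-space} \;\Longleftrightarrow\; X \text{ is separable},
\]
i.e., $(3)\Leftrightarrow(4)\Leftrightarrow(5)$. On the other hand, every $\aleph$-space has a $\sigma$-locally finite $k$-network, which in particular is a $\sigma$-locally finite network, so $X$ is a paracompact $\sigma$-space. Since $X$ is assumed to be a $k$-space, Theorem~\ref{t4} applies and gives
\[
t(V(X))\leq\omega \;\Longleftrightarrow\; t(\mbox{sp}_{4}(X))\leq\omega \;\Longleftrightarrow\; X \text{ is separable},
\]
i.e., $(1)\Leftrightarrow(2)\Leftrightarrow(5)$.

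Combining the two chains through condition~(5), I obtain the full equivalence of $(1)$ through $(5)$. There is no substantial obstacle: the only point worth emphasizing in the write-up is the justification that a paracompact $\aleph$-space simultaneously satisfies the hypotheses of both theorems (collectionwise normal $\aleph$-space on one side, paracompact $\sigma$-space on the other), after which the corollary is immediate.
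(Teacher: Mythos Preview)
Your proposal is correct and matches the paper's own approach exactly: the paper simply states that the corollary follows from Theorems~\ref{csf} and~\ref{t4}, and you have supplied precisely the verifications (paracompact $\Rightarrow$ collectionwise normal, $\aleph$-space $\Rightarrow$ $\sigma$-space) needed to see that both theorems apply. There is nothing to add.
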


{\bf Remarak:} By Theorems~\ref{t3} and~\ref{t4}, the tightness of $V(\mathbb{P})$ is countable; however, $V(\mathbb{P})$ is not a $k$-space, where the irrational number $\mathbb{P}$ endowed with the usual topology.

\section{Open questions}
In this section, we pose some interesting questions about the free topological vector spaces, which are still unknown to us.

It is well-known that for a closed subset $Y$ of a metrizable space $X$, we have $F(Y)$, $A(Y)$ and $L(Y)$ are topologically isomorphic to some subgroups of $F(X)$, $A(X)$ and $L(X)$ respectively. Hence we have the following question:

\begin{question}
Let $Y$ be a closed subset of metrizable space $X$. Is $V(Y)$ topologically isomorphic to a vector subspace of $V(X)$? What if $Y$ is a closed discrete subspace?
\end{question}

By Theorems~\ref{t7},~\ref{t4} and Lemma~\ref{t2}, it is natural to pose the following two questions:

\begin{question}
Let $X$ be a metrizable space. If $\mbox{sp}_{2}(X)$ is a $k$-space, is $\mbox{sp}_{4}(X)$ a $k$-space?
\end{question}

\begin{question}\label{q0}
Let $X$ be a metrizable space. If $\mbox{sp}_{4}(X)$ is a $k$-space, is each $\mbox{sp}_{n}(X)$ a $k$-space?
\end{question}

Indeed, we have the following some partial answer to Question~\ref{q0}.

\begin{theorem}
Let $X$ be a metrizable space. If $\mbox{sp}_{4}(X)$ is a $k$-space, then the set $X^{\prime}$ is compact and $X\setminus X^{\prime}$ is countable.
\end{theorem}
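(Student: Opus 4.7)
The plan is to reduce the $k$-property of $\mbox{sp}_{4}(X)$ to the $A_{4}(X)$-level via the closedness of $A(X)$ in $V(X)$, apply Lemma~\ref{La} to obtain a structural dichotomy for $X$, and then eliminate the `bad' configurations using the submetrizability of $V(X)$ together with the tightness obstruction of Lemma~\ref{t2}.

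First, since $X$ is metrizable, the submetrizability theorem of this section gives that $V(X)$, and hence its closed subspace $\mbox{sp}_{4}(X)$, is submetrizable. Corollary~\ref{c0} then upgrades the $k$-space property of $\mbox{sp}_{4}(X)$ to sequentiality, and so $\mbox{sp}_{4}(X)$ has countable tightness. Because \cite[Proposition~5.1]{GM2017} exhibits $A(X)$ as a closed subspace of $V(X)$, the intersection $A_{4}(X)=A(X)\cap\mbox{sp}_{4}(X)$ is closed in $\mbox{sp}_{4}(X)$ and hence is itself a $k$-space. Lemma~\ref{La} then yields the dichotomy that $X$ is either (a) the topological sum $Y\oplus D$ of a $k_{\omega}$-space $Y$ and a discrete space $D$, or (b) metrizable with $X^{\prime}$ compact.

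Second, I would prove that $X\setminus X^{\prime}$ is countable. Suppose, for a contradiction, that the set of isolated points $I=X\setminus X^{\prime}$ is uncountable. In the metrizable (hence paracompact) space $X$ one can extract an uncountable discrete $D_{0}\subseteq I$ that is closed in $X$; since $D_{0}$ consists of isolated points, $D_{0}$ is clopen and $X=(X\setminus D_{0})\oplus D_{0}$ splits topologically. By \cite[Corollary~2.6]{GM2017} this gives $V(X)\cong V(X\setminus D_{0})\oplus V(D_{0})$, so $V(D_{0})$, and therefore $\mbox{sp}_{2}(D_{0})$, is closed in $V(X)$, hence closed in $\mbox{sp}_{4}(X)$. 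As a closed subspace of the sequential space $\mbox{sp}_{4}(X)$, $\mbox{sp}_{2}(D_{0})$ is sequential and in particular of countable tightness, directly contradicting Lemma~\ref{t2}. This proves that $I$ is countable.

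Third, for the compactness of $X^{\prime}$, case (b) of Lemma~\ref{La} gives it immediately. In case (a) the countability of $X\setminus X^{\prime}$ just established forces the discrete summand $D$ to be countable, so $X=Y\oplus D$ is itself a metrizable $k_{\omega}$-space with $X^{\prime}=Y^{\prime}$. The main obstacle is this last step: extracting genuine compactness (rather than merely $\sigma$-compactness) of $Y^{\prime}$ from the $\mbox{sp}_{4}$-level $k$-property. The integer-scalar analysis behind Lemma~\ref{La} cannot distinguish $A_{4}(X)$ from $\mbox{sp}_{4}(X)$, so one must exploit the real-scalar structure available only in $\mbox{sp}_{4}(X)$: from a countable set $\{y_{n}\}\subseteq Y^{\prime}$ that escapes every compact subset of $Y$, I would attempt to manufacture, in the spirit of Proposition~\ref{p4} and Lemma~\ref{t1}, a family of elements $\tfrac{1}{n}y_{n}\in\mbox{sp}_{1}(X)\subseteq\mbox{sp}_{4}(X)$ whose closure behaviour contradicts the sequentiality of $\mbox{sp}_{4}(X)$, thereby forcing $Y^{\prime}$ to be compact and completing the proof.
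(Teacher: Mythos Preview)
Your overall strategy coincides with the paper's own one-line sketch: reduce to $A_{4}(X)$ via the closed embedding $A(X)\hookrightarrow V(X)$, invoke Lemma~\ref{La} for the structural dichotomy, and use Lemma~\ref{t2} together with the argument of Theorem~\ref{t4}(2)$\Rightarrow$(3) to bound the isolated part. So there is no methodological divergence to discuss.

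The obstacle you flag in your third paragraph is not a weakness of your write-up; it is a genuine gap in the statement itself. In case~(a) of Lemma~\ref{La} the conclusion ``$X'$ is compact'' simply need not hold, and no amount of real-scalar trickery of the Proposition~\ref{p4}/Lemma~\ref{t1} type will produce it. Take $X=\mathbb{R}$: this is a metrizable $k_{\omega}$-space, so by \cite[Theorem~3.1]{GM2017} the whole space $V(\mathbb{R})$ is $k_{\omega}$ and hence $\mbox{sp}_{4}(\mathbb{R})$ is a $k$-space; yet $\mathbb{R}'=\mathbb{R}$ is not compact. Thus the theorem, as stated, is false, and neither the paper's brief reference to Lemmas~\ref{t2},~\ref{La} and Theorem~\ref{t4} nor any completion of your outline can establish it. The ingredients listed actually yield only the weaker conclusion that $X'$ is separable (via Lemma~\ref{llll5} and the argument in Theorem~\ref{t4}) and that $X\setminus X'$ is countable (via Lemma~\ref{t2}); this weaker statement suffices for the ensuing locally compact corollary, since a separable locally compact metrizable space is $\sigma$-compact and hence $k_{\omega}$.

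A minor remark on your second step: the existence of an uncountable \emph{clopen} $D_{0}\subseteq X\setminus X'$ deserves one more sentence. In case~(b) use $D_{0}=\{x:d(x,X')\geq 1/n\}$ for suitable $n$; in case~(a) note that $Y$ is separable metrizable, so $Y\setminus Y'$ is countable and the uncountability of the isolated part forces $D$ itself to be uncountable, whence $D_{0}=D$ works.
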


\begin{proof}
Indeed, it is easy to see by Lemmas~\ref{t2},~\ref{La} and the proof of (2) $\Rightarrow$ (3) in Theorem~\ref{t4}.
\end{proof}

\begin{corollary}
Let $X$ be a locally compact metrizable space. Then the following statements are equivalent:
 \begin{enumerate}
\item $V(X)$ is a $k$-space;
\item $V(X)$ is a $k_{\omega}$-space;
\item $\mbox{sp}_{4}(X)$ is a $k$-space;
\item $\mbox{sp}_{4}(X)$ is a $k_{\omega}$-space;
\item $X$ is a $k_{\omega}$-space.
  \end{enumerate}
\end{corollary}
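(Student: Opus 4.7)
The plan is to deduce this corollary almost entirely from the preceding results, combined with two simple observations about locally compact metrizable spaces.

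First, since every metrizable space is $k^*$-metrizable, the equivalences (1) $\Leftrightarrow$ (2) $\Leftrightarrow$ (5) follow directly from Theorem~\ref{t3}. So it only remains to integrate $\mbox{sp}_{4}(X)$ into the chain. The implication (2) $\Rightarrow$ (4) is easy: by \cite[Theorem 2.3]{GM2017} the subset $\mbox{sp}_{4}(X)$ is closed in $V(X)$, and a closed subspace of a $k_{\omega}$-space is again $k_{\omega}$ (intersect the witnessing countable compact cover of $V(X)$ with $\mbox{sp}_{4}(X)$). The implication (4) $\Rightarrow$ (3) is trivial.

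The only real content is (3) $\Rightarrow$ (5). Suppose $\mbox{sp}_{4}(X)$ is a $k$-space. By the unnumbered theorem immediately preceding this corollary, $X'$ is compact and $X\setminus X'$ is countable. Since $X$ is locally compact, $X\setminus X'$ is a countable discrete open subspace, and we may write $X = X' \cup (X\setminus X')$ as the union of a compact set and a countable set of isolated points. Each point of $X\setminus X'$ is compact, so $X$ is $\sigma$-compact; being locally compact, Hausdorff and $\sigma$-compact, $X$ is a $k_{\omega}$-space (enumerating any countable compact cover gives the defining sequence), which is precisely (5).

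No serious obstacle is expected here, since the preceding theorem already does the heavy lifting by pinning down the structure of $X$ when $\mbox{sp}_{4}(X)$ is a $k$-space; the only task is to observe that local compactness upgrades ``$X'$ compact and $X\setminus X'$ countable'' into $\sigma$-compactness, and hence into the $k_{\omega}$-property, closing the circle of implications.
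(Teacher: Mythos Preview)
Your proof is correct and follows essentially the same route the paper intends: the corollary is stated immediately after the theorem asserting that $X'$ is compact and $X\setminus X'$ is countable whenever $\mbox{sp}_{4}(X)$ is a $k$-space, and you use exactly that result (together with Theorem~\ref{t3} for the equivalences $(1)\Leftrightarrow(2)\Leftrightarrow(5)$) to close the cycle via local compactness and $\sigma$-compactness. The paper gives no explicit argument for the corollary, so your write-up simply makes the implicit deduction explicit.
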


\begin{question}
Let $X$ be a metrizable space. If $\mbox{sp}_{2}(X)$ is of countable tightness, is $\mbox{sp}_{4}(X)$ of countable tightness?
\end{question}

\begin{question}
Let $X$ be a non-metrizable $k^*$-metrizable space. If $\mbox{sp}_{2}(X)$ is a $k$-space, is $V(X)$ a $k$-space?
\end{question}


  \end{document}